\documentclass{amsart}

\usepackage{amssymb}
\usepackage{color}

\newcommand{\mfT}{{\mathfrak T}}
\newcommand{\R}{\mathbb R}

\newcommand{\dsp}{\displaystyle}
\newtheorem{proposition}{Proposition}
\newtheorem{theorem}{Theorem}
\newtheorem{definition}{Definition}
\newtheorem{remark}{Remark}

\newtheorem{lemma}{Lemma}

\title[Derivation and analysis of A NEW $2D$ Green-Naghdi system]{Derivation and analysis of A NEW $2D$ Green-Naghdi system}
\author{Samer Israwi}
\address{Universit\'e Bordeaux I; IMB, et CNRS, UMR 5251, 351 Cours de la Lib\'eration, 33405 Talence Cedex, France}
\thanks{Ce travail a b\'en\'efici\'e d'une aide de l'Agence Nationale de la Recherche portant  la r\'ef\'erence ANR-08-BLAN-0301-01}
\email{Samer.Israwi@math.u-bordeaux1.fr}

\begin{document}

\maketitle
\begin{abstract}
We derive here a variant of the $2D$ Green-Naghdi  equations that model the propagation of   two-directional, nonlinear dispersive waves
in  shallow water. This new model has the same accuracy  as the standard $2D $ Green-Naghdi equations. 
Its mathematical interest is that it allows a control of the rotational part of the  (vertically averaged) horizontal velocity, which is not the case for  the usual Green-Naghdi equations. Using
this property, we
show that  the solution of these new  equations  can be constructed by a 
standard Picard iterative scheme so that there is no loss of regularity of the solution
with respect to the initial condition. Finally, we prove that the new Green-Naghdi  equations
 conserve the almost irrotationality of the vertically averaged horizontal component of the velocity.
\end{abstract}
\section{Introduction}
\subsection{General setting}
 The water-waves problem,  consists in studying the motion of the 
free surface and the evolution of the velocity field of a layer of fluid under the following assumptions:
the fluid is ideal, incompressible, irrotationnal, and under the only influence of gravity.
 Many works have set a good theoretical background for this problem. 
 Its local well-posedness has been discussed among others by Nalimov \cite{nalimov}, Yosihara \cite{yoshihara}, Craig \cite{craig}, Wu \cite{wu1,wu2} and Lannes \cite{lannes}. 
  Since we are interested here in the asymptotic behavior of the solutions, it is convenient to work with a non-dimensionalized version of the equations. 
 In this framework, (see for instance \cite{AL} for details), the free surface is 
parametrized by   $z=\varepsilon\zeta(t,X)$  (with $X=(x,y)\in \R^2$)
and the bottom by $z=-1+\beta b(X)$. 
Here, $\varepsilon$ and $\beta$ are dimensionless parameters  defined as 

\begin{equation*}
\varepsilon=\frac{a_{surf}}{h_0},\quad
\beta=\frac{b_{bott}}{h_0};
\end{equation*}
where  $a_{surf}$ is the typical  amplitude of the waves
and $b_{bott}$
is the typical amplitude of the bottom deformations,
while $h_0$ is the  depth. 
One can use the incompressibility and
irrotationality  conditions to write  the non-dimensionalized water-waves  equations under
Bernoulli's formulation,  in terms of a  velocity potential $\varphi$
associated to the flow, where $\varphi(t, .)$ is defined on $\Omega_t
= \{(X,z), -1  +\beta b(x) < z < \varepsilon\zeta(t,X)\}$ (i.e. the velocity
field is given by $U=\nabla_{X,z}\varphi$) :
\smallskip
\begin{equation}
\left\{
\begin{array}{lcl}
\dsp\mu\partial_x^2 \varphi+\mu\partial_y^2 \varphi + \partial_z^2 \varphi = 0, & \hbox{at} & -1+\beta b< z<\varepsilon \zeta ,\vspace{1mm}\\
\dsp\partial_z\varphi-\mu\beta\nabla b\cdot\nabla\varphi=0& \hbox{at} & z = -1 + \beta b,\vspace{1mm}\\
\dsp\partial_t \zeta -\frac{1}{\mu}(\mu\varepsilon\nabla \zeta\cdot\nabla \varphi + \partial_z \varphi)=0, & \hbox{at} & z=\varepsilon\zeta,\vspace{1mm}\\
\dsp\partial_t \varphi + \frac{1}{2}(\varepsilon\vert\nabla \varphi\vert^2 + \frac{\varepsilon}{\mu}(\partial_z \varphi)^2) + \zeta= 0 & \hbox{at} & z=\varepsilon \zeta,
\end{array}
\right.
\label{wwadim1}
\end{equation}
where $\nabla=(\partial_x,\partial_y)^T=(\partial_1,\partial_2)^T$. The \emph{shallowness}  parameter  $\mu$ appearing in this set of equations is defined as 
\begin{equation*}
 \mu=\frac{h_0^2}{\lambda^2}, 
 \end{equation*}
where,  $\lambda$ is the typical wave-length of the waves.
Making assumptions on the   size of $\varepsilon$, $\beta$, and
$\mu$ one is led to derive (simpler) asymptotic models from (\ref{wwadim1}). In
the  shallow-water  scaling  $(\mu\ll1)$,  one  can  derive  (when no smallness
assumption is made on  $\varepsilon$ and $\beta$ ) the standard Green-Naghdi equations (see \S \ref{pres1} below for a derivation and \cite{AL} for a
rigorous justification). For two-dimensional surfaces and over uneven
bottoms these  equations couple the free surface  elevation $\zeta$ to
the vertically averaged horizontal component of the velocity,
\begin{equation}
v(t,X)=\frac{1}{1+\varepsilon\zeta-\beta b}\int_{-1+\beta b}^{\varepsilon\zeta}\nabla\varphi(t,X,z)dz,
\label{averaged}
\end{equation}
and can be written as:
 \begin{equation}
\left\{
\begin{array}{lcl}
\dsp\partial_t\zeta+\nabla\cdot (hv)=0,\vspace{1mm}\\
\dsp\big(h+\mu\mathcal{T}[h,\beta b]\big)\partial_t v+h\nabla\zeta+\varepsilon \dsp\big(h+\mu\mathcal{T}[h,\beta b]\big)
( v\cdot \nabla )v\vspace{1mm}\\
\dsp\indent+\mu\varepsilon\big\lbrace\frac{2}{3}\nabla[(h^3(\partial_1v\cdot\partial_2v^\perp+(\nabla\cdot v)^2)]+ \Re[h,\beta b](v)\big\rbrace=0,
\end{array}
\right.
\label{GN}
\end{equation}
where $h=1+\varepsilon\zeta- \beta b$, $v=(V_1,V_2)^T$, $v^\perp=(-V_2,V_1)^\perp$, and 
\begin{eqnarray}\label{defoper}
\mathcal{T}[h,\varepsilon b]W
&=&-\frac{1}{3}\nabla(h^3\nabla\cdot W)+\frac{\beta}{2}[\nabla(h^2\nabla b\cdot W)-h^2 \nabla b\nabla\cdot W]\\&&+\beta^2 h\nabla b \nabla b\cdot W\nonumber,
\end{eqnarray}
while the purely topographic term $\Re[h,\beta b](v)$ is defined as:
\begin{eqnarray}\label{defoper1}
\Re[h,\beta b](v)&=&\frac{\beta}{2}\nabla\big(h^2(V_1^2\partial_1^2b+2V_1V_2\partial_1\partial_2 b+V_2^2\partial_2^2b)\big)\\
& &\nonumber+\beta h^2\big(\partial_1v\cdot\partial_2v^\perp+(\nabla\cdot v)^2\big)\nabla b\\
& &+\beta^2 h\big(V_1^2\partial_1^2b+2V_1V_2\partial_1\partial_2 b+V_2^2\partial_2^2b\big)\nabla b\nonumber.
\end{eqnarray}

A rigorous justification of the standard GN model was given by Li \cite{li} in $1D$ and for flat bottoms,  and by B. Alvarez-Samaniego and D. Lannes  \cite{AL}
 in the general case. This latter reference relies on  well-posedness results for these equations given in \cite{AL2} and based on general well-posedness results 
for evolution equations using a Nash-Moser scheme. The result of \cite{AL2} covers both the case of $1D$ and $2D$
surfaces, and allows for non flat bottoms. The reason why  a Nash-Moser scheme is used there is because
the estimates on the linearized equations exhibit losses of derivatives. However, in the $1D$ case, 
such losses do not occur and it is possible to construct a solution with a standard Picard iterative scheme
as in \cite{li,SI1} with flat and non flat bottoms respectively. But, this is not the case in $2D$, since  for instance, the term $\partial_1v\cdot\partial_2v^\perp$
is not controled by the energy norm $\vert \cdot\vert_{Y^s}$ naturally associated to (\ref{GN}) (see \cite{AL2}),
$$
\vert (\zeta,v)\vert_{Y^s}^2=\vert\zeta\vert_{H^s}^2+\vert v\vert_{H^s}^2
+\mu \vert \nabla\cdot v\vert_{H^s}^2.
$$ 

This is the motivation for the present derivation of a new variant  of the $2D $ Green-Naghdi  equations (\ref{GN}). This variant has  the same accuracy as the standard $2D $ Green-Naghdi equations (\ref{GN})
 (see \S \ref{pres2} below for a derivation) and can be written under the form:
 \begin{equation}
\left\{
\begin{array}{lcl}
\dsp\partial_t\zeta+\nabla\cdot (hv)=0,\vspace{1mm}\\
\dsp\Big(h+\mu\big(\mathcal{T}[h,\beta b]-\nabla^\perp\hbox{curl}\;\big)\Big)\partial_t v+h\nabla\zeta\vspace{1mm}\\\dsp\indent+\varepsilon \dsp\Big(h+\mu\big(\mathcal{T}[h,\beta b]-\nabla^\perp\hbox{curl}\,\;\big)\Big)
( v\cdot \nabla )v\vspace{1mm}\\
\dsp\indent+\mu\varepsilon\Big\lbrace\frac{2}{3}\nabla[h^3(\partial_1v\cdot\partial_2v^\perp+(\nabla\cdot v)^2)]+ \Re[h,\beta b](v)\Big\rbrace=0,
\end{array}
\right.
\label{GNnew}
\end{equation}
where $\nabla^\perp=(-\partial_y,\partial_x)^T$, $\hbox{curl}\,v=\partial_1 V_2-\partial_2 V_1$, 
while the linear operators  $\mathcal{T}[h,\varepsilon b]$ and $\Re[h,\beta b]$ are defined in (\ref{defoper}) and (\ref{defoper1}).\\
The reason why the new terms (involving $\nabla^\perp\hbox{curl}$ ) do not affect the precision of the model is because the solutions to (\ref{GN}) are nearly irrotational (in the sense that $\hbox{curl}\, v$ is small). This property is of course 
satisfied also by our new model (\ref{GNnew}). The presence of these new terms allows the definition of a new energy norm that controls  also the rotational part of $v$. Consequently, 
we show that  it is possible to use a standard   Picard iterative scheme to prove the well-posedness of (\ref{GNnew}),
so that there is no loss of regularity of the solution with respect to the initial condition.  
\subsection{Organization of the paper}
 We first recall the derivation of the standard $2D$ Green-Naghdi equations in Section \ref{pres1} while in Section  \ref{pres2} we derive the  new model (\ref{GNnew}).
  We give some preliminary results  in Section \ref{prnew};  the main theorem 
which proves the well-posedness of this new Green-Naghdi system is
then stated in Section \ref{lanew} and proved in Section \ref{mrnew}. Finally, in Section \ref{conservirrot}  we prove that  (\ref{GNnew})
 conserves the almost irrotationality of $v$.
\subsection{Notation}
We denote by $C(\lambda_1, \lambda_2,...)$ a constant depending on the parameters 
$\lambda_1$, $\lambda_2$, ... and \emph{whose dependence on the $\lambda_j$ is always assumed to be nondecreasing}.\\
The notation $a\lesssim b$ means that $a\leq Cb$, for some nonegative constant $C$ whose exact expression is of no importance (\emph{in particular, it is independent of the small parameters involved}).\\
Let $p$ be any constant
with $1\leq p< \infty$ and denote $L^p=L^p(\R^2)$ the space of all Lebesgue-measurable functions 
$f$ with the standard norm $$\vert f \vert_{L^p}=\big(\int_{\R^2}\vert f(X)\vert^p dX\big)^{1/p}<\infty.$$ When $p=2$,
we denote the norm $\vert\cdot\vert_{L^2}$ simply by $\vert\cdot\vert_2$. The inner product of any functions $f_1$
and $f_2$ in the Hilbert space $L^2(\R^2)$ is denoted by
$$
(f_1,f_2)=\int_{\R^2}f_1(X)f_2(X) dX.
$$  
The space $L^\infty=L^\infty(\R^2)$ consists of all essentially bounded, Lebesgue-measurable functions
$f$ with the norm
$$
\vert f\vert_{L^\infty}= \hbox{ess}\sup \vert f(X)\vert<\infty.
$$
We denote by $W^{1,\infty}=W^{1,\infty}(\R^2)=\{f\in L^\infty, \nabla f\in (L^{\infty})^2\}$ endowed with its canonical norm.\\
For any real constant $s$, $H^s=H^s(\R^2)$ denotes the Sobolev space of all tempered
distributions $f$ with the norm $\vert f\vert_{H^s}=\vert \Lambda^s f\vert_2 < \infty$, where $\Lambda$ 
is the pseudo-differential operator $\Lambda=(1-\Delta)^{1/2}$.\\
For any functions $u=u(x,t)$ and $v(x,t)$
defined on $\R^2\times [0,T)$ with $T>0$, we denote the inner product, the $L^p$-norm and especially
the $L^2$-norm, as well as the Sobolev norm,
with respect to the spatial variable $X$, by $(u,v)=(u(\cdot,t),v(\cdot,t))$, $\vert u \vert_{L^p}=\vert u(\cdot,t)\vert_{L^p}$, 
$\vert u \vert_{L^2}=\vert u(\cdot,t)\vert_{L^2}$ , and $ \vert u \vert_{H^s}=\vert u(\cdot,t)\vert_{H^s}$, respectively.\\
Let $C^k(\R^2)$ denote the space of  
$k$-times continuously differentiable functions and $C^{\infty}_0(\R^2)$ denote the space of infinitely differentiable
functions, with compact support in $\R^2$; we also denote by $C^\infty_b(\R^2)$ the space of infinitely differentiable functions that are bounded together with all their derivatives.\\
For any closed operator $T$ defined on a Banach space $X$ of functions, the commutator $[T,f]$ is defined
 by $[T,f]g=T(fg)-fT(g)$ with $f$, $g$ and $fg$ belonging to the domain of $T$.\\
 We denote  $v^\perp=(-V_2,V_1)^T$ and $\hbox{curl}\,v=\partial_1 V_2-\partial_2 V_1$ where $v=(V_1,V_2)^T$.
 \section{Derivation of the new Green-Naghdi model}\label{pres}

 This section is devoted to the derivation of a new Green-Naghdi  asymptotic model for the water-waves equations  in the shallow water ($\mu\ll 1$)
 of the same accuracy as the standard $2D $ Green-Naghdi equations (\ref{GN}).
 \subsection{Derivation of the standard Green-Naghdi equations (\ref{GN})}\label{pres1}
 We recall here the main steps of \cite{LB} for the derivation of  the standard $2D $ Green-Naghdi equations (\ref{GN}). 
 In order to reduce the model (\ref{wwadim1}) into a model of two equations we introduce the trace of the velocity potential at the free surface, defined as
$$
\psi=\varphi_{\mid_{z=\varepsilon\zeta}},
$$
and the Dirichlet-Neumann operator  $\mathcal{G}_{\mu}[\varepsilon\zeta,\beta b]\cdot$  as
$$
\mathcal{G}_{\mu}[\varepsilon\zeta,\beta b]\psi=-\mu\varepsilon\nabla\zeta\cdot\nabla\varphi_{\mid_{z=\varepsilon\zeta}} + \partial_z \varphi_{\mid_{z=\varepsilon\zeta}},
$$
with $\varphi$ solving the boundary value problem
\begin{equation}\label{bvp}
\left\{
\begin{array}{lcl}
\dsp\mu\partial_x^2 \varphi+\mu\partial_y^2 \varphi + \partial_z^2 \varphi = 0,\\
\dsp\partial_n\varphi_{\mid_{z=-1+\beta b}}=0,\\
\dsp\varphi_{\mid_{z=\varepsilon\zeta}}=\psi.
\end{array}
\right.
\end{equation}
 As remarked in \cite{zakarov,craigsulem}, the equations (\ref{wwadim1}) are equivalent to a set of two equations on the free surface parametrization $\zeta$ and the
trace of the velocity potential at the surface $\psi=\varphi_{\mid_{z=\varepsilon\zeta}}$ involving the Dirichlet-Neumann operator. Namely
\begin{equation}
\left\{
\begin{array}{lcl}
\dsp\partial_t \zeta+\frac{1}{\mu}\mathcal{G}_{\mu}[\varepsilon\zeta,\beta b]\psi= 0,\\
\dsp\partial_t\psi+\zeta+\frac{\varepsilon}{2}\vert\nabla\psi\vert^2
-\varepsilon\mu\dsp\frac{(\frac{1}{\mu}\mathcal{G}_{\mu}[\varepsilon\zeta,\beta b]\psi+\nabla(\varepsilon\zeta)\cdot\nabla\psi)^2}{2(1+\varepsilon^2\mu\vert\nabla\zeta\vert^2)}=0.
\end{array}
\right.
\label{Zakarov}
\end{equation}
It is a straightforward consequence of Green's identity that $$\frac{1}{\mu}\mathcal{G}_{\mu}[\varepsilon\zeta,\beta b]\psi=-\nabla\cdot (h v),$$
with $h=1+\varepsilon\zeta-\beta b$ and $v=\frac{1}{1+\varepsilon\zeta-\beta b}\int_{-1+\beta b}^{\varepsilon\zeta}\nabla\varphi(t,X,z)dz$.
 Therefore, the  first equation of (\ref{Zakarov}) exactly coincides 
with the first equation of (\ref{GN}). In order to derive the evolution equation on $v$,
 the key point is to obtain an asymptotic expansion of  $\nabla\psi$  with respect  to $\mu$ and in terms pf $v$ and $\zeta$.
As in \cite{LB}, we look for an asymptotic expansion of $\varphi$ under the form
\begin{equation}\label{exp of phi}
\varphi_{app}=\sum_{j=0}^{N}\mu^j\varphi_j.
\end{equation}
Plugging this expression into the boundary value problem (\ref{bvp}) one can cancel the residual up to the order $O(\mu^{N+1})$ provided that
\begin{equation}\label{phij}
\forall j=0,...,N,\qquad  \partial_z^2 \varphi_j = -\partial_x^2 \varphi_{j-1}-\partial_y^2 \varphi_{j-1} 
\end{equation}
(with the convention that $\varphi_{-1}=0$), together with the boundary conditions
\begin{equation}\label{phijbvp}
\forall j=0,...,N,\qquad 
\left\{
\begin{array}{ll}
\varphi_{j_{\mid_{z=\varepsilon\zeta}}}=\delta_{0,j}\psi,\\
\partial_z\varphi_j=\beta \nabla b\cdot\nabla\varphi_{j-1_{\mid_{z=-1+\beta b}}}
\end{array}
\right.
\end{equation}
(where $\delta_{0,j}=1$ if $j=0$ and $0$ otherwise).\\
By solving the ODE (\ref{phij}) with the boundary conditions (\ref{phijbvp}), one finds (see \cite{LB}) 
\begin{eqnarray}\label{phi0}
\varphi_0&=&\psi,\\
\varphi_1&=&(z-\varepsilon\zeta)\big(-\frac{1}{2}(z+\varepsilon\zeta)-1+\beta b\big)\Delta\psi+\beta(z-\varepsilon\zeta)\nabla b.\nabla \psi.\label{phi1}
\end{eqnarray}
According to formulae (\ref{phi0}), (\ref{phi1}), the horizontal component of the velocity in the fluid domain is given by 
$$V(z) = \nabla \varphi_0 (z) + \nabla \varphi_1 (z)  + O(\mu^2).$$
The  averaged velocity is thus given by 
 $$
 v=\dsp\frac{1}{h}\int_{-1+\beta b}^{\varepsilon\zeta}  (\nabla \varphi_0 (z) + \nabla \varphi_1 (z))\;dz + O(\mu^2),
 $$
 or equivalently 
$$
v=\nabla\psi-\mu\frac{1}{h}\mathcal{T}[h,\beta b]\nabla\psi+O(\mu^{2}),
$$
and thus 
\begin{equation}\label{exppsi}
\nabla\psi=v+\mu\frac{1}{h}\mathcal{T}[h,\beta b]\nabla\psi+O(\mu^{2}),
\end{equation}
where $\mathcal{T}[h,\beta b]$ is as in (\ref{defoper}).
As in \cite{LB}, taking the gradient of the second equation of (\ref{Zakarov}), replacing $\nabla\psi$ by its expression (\ref{exppsi})
and $\frac{1}{\mu}\mathcal{G}_{\mu}[\varepsilon\zeta,\beta b]\psi$ by $-\nabla\cdot (h v)$
in the resulting equation,  gives the standard  Green-Naghdi equations (after dropping the $O(\mu^2)$ terms),
 \begin{equation*}
\left\{
\begin{array}{lcl}
\dsp\partial_t\zeta+\nabla\cdot (hv)=0,\vspace{1mm}\\
\dsp\big(h+\mu\mathcal{T}[h,\beta b]\big)\partial_t v+h\nabla\zeta+\varepsilon \dsp\big(h+\mu\mathcal{T}[h,\beta b]\big)
( v\cdot \nabla )v\vspace{1mm}\\
\dsp\indent+\mu\varepsilon\big\lbrace\frac{2}{3}\nabla[h^3(\partial_1v\cdot\partial_2v^\perp+(\nabla\cdot v)^2)]+ \Re[h,\beta b](v)\big\rbrace=0,
\end{array}
\right.
\end{equation*}
where $h=1+\varepsilon\zeta- \beta b$, $v=(V_1,V_2)^T$, $v^\perp=(-V_2,V_1)^T$, and 
the linear operators  $\mathcal{T}[h,\varepsilon b]$ and $\Re[h,\beta b]$ being defined in (\ref{defoper}) and (\ref{defoper1}).

\subsection{Derivation of the new Green-Naghdi system  (\ref{GNnew})}\label{pres2}

It is quite common in the literature to work with variants of the Green-Naghdi equations
(\ref{GNnew}) that differ only up to terms of order $O(\mu^2)$ in order to improve the dispersive
properties of the model (see for instance \cite{WKGS,CLM}) or to change its mathematical
properties \cite{MGH}. Our approach here is in the same spirit since our goal is to derive
a new model with better energy estimates.\\
In order to  obtain the new Green-Naghdi system (\ref{GNnew}), let us remark  that,
from the expression  of $v$ one has
$$
\nabla\psi=v+\frac{\mu}{h}\mathcal{T}[h,\beta b]\nabla\psi+O(\mu^{2});
$$
and since $v=\nabla\psi+O(\mu),$ one gets the following Lemma:
\begin{lemma}\label{newformulate}
Let $v$ be the vertically averaged horizontal 
component of the velocity given above, one obtains
\begin{eqnarray}\label{definew v}
\hbox{\textnormal{curl}}\,\partial_tv&=&O(\mu),\nonumber\\
\hbox{\textnormal{curl}}\, (v\cdot\nabla)v&=&O(\mu).
\end{eqnarray}
\end{lemma}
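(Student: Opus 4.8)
The idea is that, at leading order in $\mu$, the averaged velocity $v$ coincides with the gradient $\nabla\psi$, hence is irrotational up to $O(\mu)$; the two estimates then follow by commuting $\hbox{curl}$ with $\partial_t$ and by using the standard two-dimensional identity for the convective term. First, from the expansion (\ref{exppsi}) one has
\begin{equation*}
v = \nabla\psi - \frac{\mu}{h}\mathcal{T}[h,\beta b]\nabla\psi + O(\mu^2) = \nabla\psi + \mu\, r,
\end{equation*}
where $r = -\frac{1}{h}\mathcal{T}[h,\beta b]\nabla\psi + O(\mu)$ is bounded, uniformly in $\mu\in(0,1)$, in every Sobolev norm as soon as $\zeta$, $b$ and $\psi$ are smooth enough (this relies on the elliptic estimates for (\ref{bvp}) controlling $\nabla\psi$ in terms of $v$ and $\zeta$). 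Since $\hbox{curl}\,\nabla = 0$ we obtain at once $\hbox{curl}\, v = \mu\,\hbox{curl}\, r = O(\mu)$, and, commuting $\hbox{curl}$ with $\partial_t$, $\hbox{curl}\,\partial_t v = \partial_t(\hbox{curl}\, v) = \mu\,\partial_t(\hbox{curl}\, r) = O(\mu)$, the time derivative of $r$ being expressed --- and bounded uniformly in $\mu$ --- through the evolution equations (\ref{Zakarov}).

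For the second estimate, recall the elementary two-dimensional identity, valid for any vector field $w$,
\begin{equation*}
(w\cdot\nabla)w = \frac{1}{2}\nabla|w|^2 + (\hbox{curl}\, w)\,w^\perp,
\end{equation*}
which is checked componentwise. Taking the curl and using $\hbox{curl}\,\nabla = 0$ and $\hbox{curl}(w^\perp) = \nabla\cdot w$ gives
\begin{equation*}
\hbox{curl}\big[(w\cdot\nabla)w\big] = w\cdot\nabla(\hbox{curl}\, w) + (\hbox{curl}\, w)\,(\nabla\cdot w).
\end{equation*}
Applying this with $w=v$ and invoking the previous step --- so that $\hbox{curl}\, v = O(\mu)$ and likewise $\nabla(\hbox{curl}\, v) = \mu\,\nabla(\hbox{curl}\, r) = O(\mu)$ at the expense of one extra derivative on $r$ --- shows that both terms on the right-hand side are $O(\mu)$, hence $\hbox{curl}\big[(v\cdot\nabla)v\big] = O(\mu)$.

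The only point requiring some care is the precise meaning of ``$O(\mu)$'': one must verify that $r$, together with its first spatial and time derivatives, is bounded in the relevant Sobolev norms uniformly with respect to $\mu\in(0,1)$. This is precisely what the elliptic regularity estimates for the boundary value problem (\ref{bvp}) --- and, for the time derivative, the water-waves system (\ref{Zakarov}) --- provide; at the formal level at which this lemma is used in the derivation of (\ref{GNnew}), it is immediate.
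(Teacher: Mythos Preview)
Your argument is correct and rests on the same observation as the paper's proof: $v=\nabla\psi+O(\mu)$, whence $\hbox{curl}\,v=O(\mu)$ and $\hbox{curl}\,\partial_t v=\partial_t(\hbox{curl}\,v)=O(\mu)$. The only difference lies in the treatment of the convective term: the paper simply substitutes $v=\nabla\psi+O(\mu)$ into $(v\cdot\nabla)v$, uses that $(\nabla\psi\cdot\nabla)\nabla\psi=\tfrac12\nabla|\nabla\psi|^2$ is a gradient, and then applies $\hbox{curl}$; you instead invoke the two-dimensional identity $(w\cdot\nabla)w=\tfrac12\nabla|w|^2+(\hbox{curl}\,w)\,w^\perp$ and its curl, so that $\hbox{curl}[(v\cdot\nabla)v]$ is expressed directly in terms of $\hbox{curl}\,v$ and $\nabla(\hbox{curl}\,v)$. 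The two routes are equivalent, yours being slightly more explicit in that it displays the dependence of the convective curl on $\hbox{curl}\,v$ without reintroducing $\psi$; this is convenient but buys nothing essential here, since the lemma is used only at the formal level of the derivation.
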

\begin{remark}
For the sake of simplicity, we denote by $O(\mu)$ 
any  family of functions $(f_{\mu})_{0<\mu<1}$ 
such that $\big(\frac{1}{\mu}f_{\mu} \big)_{0<\mu<1}$
 remains bounded in $L^{\infty}([0,\frac{T}{\varepsilon}],H^{r}(\R^2))$, for some $r$ large enough.
\end{remark}
\begin{proof}
  By  applying the  operator $(\hbox{curl}\,\partial_t)(\cdot)$ to the identity  $v=\nabla\psi+O(\mu)$ one gets 
 $$
\hbox{curl}\,\partial_t v=O(\mu).
 $$
 In order to prove the second identity  of (\ref{definew v}), replace  $v=\nabla\psi+O(\mu)$ in $(v\cdot\nabla)v$ and
 apply the operator $\hbox{curl}\,(\cdot)$  to deduce 
 $$
\hbox{curl}\,(v\cdot\nabla)v=O(\mu).
 $$
\end{proof}
Using Lemma \ref{newformulate}, the quantities $\mu\nabla^\perp\hbox{curl}\,\partial_tv$ and 
$\mu\nabla^\perp\hbox{curl}\,\varepsilon(v\cdot\nabla)v$ 
are of  size $O(\mu^2)$, which is the precision of the GN equations (\ref{GN}). We can thus include these new terms in the second equation of (\ref{GN}) to get 
 \begin{equation*}
\left\{
\begin{array}{lcl}
\dsp\partial_t\zeta+\nabla\cdot (hv)=0,\vspace{1mm}\\
\dsp\Big(h+\mu\big(\mathcal{T}[h,\beta b]-\nabla^\perp\hbox{curl}\;\big)\Big)\partial_t v+h\nabla\zeta\vspace{1mm}\\\dsp\indent+\varepsilon \dsp\Big(h+\mu\big(\mathcal{T}[h,\beta b]-\nabla^\perp\hbox{curl}\;\big)\Big)
( v\cdot \nabla )v\vspace{1mm}\\
\dsp\indent+\mu\varepsilon\Big\lbrace\frac{2}{3}\nabla[h^3(\partial_1v\cdot\partial_2v^\perp+(\nabla\cdot v)^2)]+ \Re[h,\beta b](v)\Big\rbrace=O(\mu^2),
\end{array}
\right.
\end{equation*}
(the linear operators  $\mathcal{T}[h,\varepsilon b]$ and $\Re[h,\beta b]$ being defined in (\ref{defoper}) and (\ref{defoper1})).
\begin{remark}
We added the quantity  $\mu\nabla^\perp\hbox{\textnormal{curl}}\,\partial_tv=O(\mu^2)$  to the standard GN  equations (\ref{GN}) to obtain an energy  norm  $\vert \cdot\vert_{X^s}$:
$$
\vert (\zeta,v)\vert_{X^s}^2=\vert\zeta\vert_{H^s}^2+\vert v\vert_{H^s}^2
+\mu \vert \nabla\cdot v\vert_{H^s}^2+\mu \vert \hbox{\textnormal{curl}}\,v\vert_{H^s}^2.
$$ 
The last term is absent from the energy $\vert \cdot\vert_{Y^s}$ associated to the standard GN equations (\ref{GN}) (see \cite{AL2}). We will see in the next section that it
allows a control of the term  $\partial_1v\cdot\partial_2v^\perp$.
\end{remark}
\begin{remark}
 The  bilinear operator $\Re[h,\beta b](v)$ only involves second order derivatives of $v$  while third order derivatives of $v$ have been factorized by 
  $h+\mu \big(\mathcal{T}[h,\varepsilon b]-\nabla^\perp\hbox{curl}\;\big)$. The fact that the operator $\Re[h,\beta b](v)$   does not involve third
order derivatives is of great interest for the well-posedness of the new Green-Naghdi model.
\end{remark}
\section{Mathematical analysis of the new Green-Naghdi model}\label{wpnew}
\subsection{Preliminary results}\label{prnew}
For the sake of simplicity, we take here and throughout the rest of this paper  ($\beta=\varepsilon$) and we write 
$$\mfT=h+\mu \big(\mathcal{T}[h,\varepsilon b]-\nabla^\perp\hbox{curl}\;\big).$$ 
We always assume that  the nonzero depth condition 
\begin{equation}\label{depthcondnew}
\exists\; h_{min} >0, \quad \inf_{X\in \R^2} h\ge h_{min},\quad h=1+\varepsilon(\zeta- b)
\end{equation}
is valid  initially, which is a necessary condition for the new GN type system (\ref{GNnew}) to be  physically valid.
We shall demonstrate that the operator $\mfT$ plays an important role in the energy estimate and the local well-posedness of the GN type system 
(\ref{GNnew}). Therefore, we give here some of its properties.

The following lemma gives an important invertibility result on $\mfT$ and some properties of the inverse operator $\mfT^{-1}$.
\begin{lemma}\label{proprimnew'}
Let $b\in C_b^{\infty}(\R^2)$, $t_0> 1$  and $\zeta \in H^{t_0+1}(\R^2)$ 
be such that (\ref{depthcondnew}) is satisfied. Then, the operator 
$\mfT$ has a 
bounded inverse on $ (L^2(\R^2))^2$, and\\
\quad (i) For all $ 0\leq s\leq t_0+1$,  one has 
$$\quad\vert \mfT^{-1}f\vert_{H^s}+\sqrt{\mu}\vert \nabla\cdot\mfT^{-1}f\vert_{H^s}
+\sqrt{\mu}\vert \hbox{\textnormal{curl}}\,\mfT^{-1}f\vert_{H^s}\leq C(\frac{1}{h_{min}},\vert h-1 \vert_{H^{t_0+1}})\vert f\vert_{H^s},
$$
 and
 $$
\quad\sqrt{\mu}\vert \mfT^{-1}\nabla g\vert_{H^s}+\sqrt{\mu}\vert \mfT^{-1}\nabla^\perp g\vert_{H^s}\leq 
C(\frac{1}{h_{min}},\vert h-1 \vert_{H^{t_0+1}})
\vert g\vert_{H^s}.$$\\
\quad (ii) If  $s\geq t_0+1$ and $\zeta\in H^s(\R^2)$ then
$$
\parallel \mfT^{-1}\parallel_{(H^s)^2\rightarrow (H^s)^2}+
\sqrt{\mu}\parallel\mfT^{-1}\nabla \parallel_{(H^s)^2\rightarrow (H^s)^2}+\sqrt{\mu}\parallel\mfT^{-1}\nabla^\perp \parallel_{(H^s)^2\rightarrow (H^s)^2}\leq c_s,
$$
and
$$
\sqrt{\mu}\parallel\nabla\cdot\mfT^{-1} \parallel_{(H^s)^2\rightarrow (H^s)^2}+\sqrt{\mu}\parallel\hbox{\textnormal{curl}}\,\mfT^{-1} \parallel_{(H^s)^2\rightarrow (H^s)^2}\leq c_s,
$$
where $c_s$ is a constant depending on $\frac{1}{h_{min}}$,  $\vert h-1 \vert_{H^{s}}$
and independent of ($\mu$,$\varepsilon$) $\in (0,1)^2$.
\end{lemma}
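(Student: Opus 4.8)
The plan is to derive both parts of the lemma from a single coercivity estimate for $\mfT$ together with the Lax--Milgram theorem, and then to propagate Sobolev regularity by a commutator induction. Since $t_0>1$ we have $\zeta\in H^{t_0+1}(\R^2)\hookrightarrow W^{1,\infty}(\R^2)$, so all norms of $h=1+\varepsilon(\zeta-b)$ and of $b$ that appear below are finite and bounded by $C(\frac{1}{h_{min}},\vert h-1\vert_{H^{t_0+1}})$, uniformly in $(\mu,\varepsilon)\in(0,1)^2$.

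\emph{Step 1: coercivity and the case $s=0$.} Integrating by parts, one computes for $W\in(H^1(\R^2))^2$
$$(\mfT W,W)=\int_{\R^2}h\vert W\vert^2+\mu\Big(\tfrac13\int h^3(\nabla\cdot W)^2-\varepsilon\int h^2(\nabla b\cdot W)(\nabla\cdot W)+\varepsilon^2\int h(\nabla b\cdot W)^2+\int\vert\hbox{curl}\,W\vert^2\Big),$$
using that $-\nabla^\perp\hbox{curl}$ produces the last term after integration by parts. Young's inequality on the cross term, with the sharp constant $3/4$, makes the bracket bounded below by $\frac1{12}h_{min}^3\vert\nabla\cdot W\vert_2^2+\vert\hbox{curl}\,W\vert_2^2$, uniformly in $\varepsilon\in(0,1)$; since $\vert\nabla\cdot W\vert_2^2+\vert\hbox{curl}\,W\vert_2^2=\vert\nabla W\vert_2^2$ by Plancherel, this gives $(\mfT W,W)\ge c_0\big(\vert W\vert_2^2+\mu\vert\nabla W\vert_2^2\big)$ with $c_0=c_0(\frac{1}{h_{min}})>0$. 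A parallel integration by parts shows that $(\mfT\,\cdot\,,\cdot)$ is continuous on $(H^1(\R^2))^2$ for the norm $\vert W\vert_{X_\mu}^2:=\vert W\vert_2^2+\mu\vert\nabla W\vert_2^2$, with constant $C(\frac{1}{h_{min}},\vert h-1\vert_{H^{t_0+1}})$ uniform in $(\mu,\varepsilon)$. Lax--Milgram then yields the bounded inverse $\mfT^{-1}$ on $(L^2(\R^2))^2$ together with $\vert\mfT^{-1}f\vert_2+\sqrt\mu\vert\nabla\mfT^{-1}f\vert_2\le C(\frac{1}{h_{min}},\vert h-1\vert_{H^{t_0+1}})\vert f\vert_2$, which is the $s=0$ case of (i) after splitting $\vert\nabla W\vert_2^2$ into $\vert\nabla\cdot W\vert_2^2+\vert\hbox{curl}\,W\vert_2^2$. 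For $f=\nabla g$ (resp. $f=\nabla^\perp g$) one instead keeps the test function $W'$ and uses $(\nabla g,W')=-(g,\nabla\cdot W')$ (resp. $(\nabla^\perp g,W')=-(g,\hbox{curl}\,W')$), so the right-hand side is controlled by $\vert g\vert_2\,\vert W'\vert_{X_\mu}/\sqrt\mu$; this is the $s=0$ form of the second estimate of (i).

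\emph{Step 2: higher regularity $0<s\le t_0+1$.} I would proceed by induction on $s$ in unit steps from the base case. Writing $W=\mfT^{-1}f$ and applying $\Lambda^s$ gives $\mfT(\Lambda^sW)=\Lambda^sf+[\mfT,\Lambda^s]W$; pairing with $\Lambda^sW$ and using the coercivity of Step 1 at $\Lambda^sW$,
$$c_0\big(\vert\Lambda^sW\vert_2^2+\mu\vert\nabla\Lambda^sW\vert_2^2\big)\le(\Lambda^sf,\Lambda^sW)+([\mfT,\Lambda^s]W,\Lambda^sW).$$
As $\nabla^\perp\hbox{curl}$ has constant coefficients, $[\mfT,\Lambda^s]W=[h,\Lambda^s]W+\mu[\mathcal{T}[h,\varepsilon b],\Lambda^s]W$. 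For the first term the standard commutator estimate gives $\vert[\Lambda^s,h]W\vert_2\lesssim\vert h-1\vert_{H^{t_0+1}}\vert W\vert_{H^{s-1}}$, and $\vert W\vert_{H^{s-1}}=\vert\mfT^{-1}f\vert_{H^{s-1}}\le C\vert f\vert_{H^{s-1}}$ by the inductive hypothesis (for $s\le1$ this is simply $\vert W\vert_2\le C\vert f\vert_2$). In $\mathcal{T}[h,\varepsilon b]$, the only term with two derivatives on $W$ is $-\frac13\nabla(h^3\nabla\cdot W)$, whose commutator is $-\frac13\nabla[\Lambda^s,h^3]\nabla\cdot W$; integrating by parts against $\Lambda^sW$ turns it into $\frac13([\Lambda^s,h^3]\nabla\cdot W,\Lambda^s\nabla\cdot W)$, and the key point is that the prefactor $\mu$ splits as $\sqrt\mu\cdot\sqrt\mu$, so
$$\mu\big\vert([\Lambda^s,h^3]\nabla\cdot W,\Lambda^s\nabla\cdot W)\big\vert\lesssim\vert h^3-1\vert_{H^{t_0+1}}\big(\sqrt\mu\vert\nabla\cdot W\vert_{H^{s-1}}\big)\big(\sqrt\mu\vert\nabla\cdot W\vert_{H^s}\big),$$
the first factor being $\le C\vert f\vert_{H^{s-1}}$ by induction and the second a piece of the quantity to be estimated. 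The remaining topographic terms of $\mathcal{T}$ (carrying $\varepsilon$ or $\varepsilon^2\le1$, and at most one derivative on $W$) are handled identically. Collecting, the right-hand side is $\le C(\frac{1}{h_{min}},\vert h-1\vert_{H^{t_0+1}})\vert f\vert_{H^s}\,N(W)$ with $N(W)=\vert W\vert_{H^s}+\sqrt\mu\vert\nabla\cdot W\vert_{H^s}+\sqrt\mu\vert\hbox{curl}\,W\vert_{H^s}$, while the left-hand side is $\gtrsim N(W)^2$; this gives the first estimate of (i). For $f=\nabla g$ or $f=\nabla^\perp g$ one runs the same induction after multiplying the inequality by $\mu$, so that the working norm becomes $\sqrt\mu\vert W\vert_{H^s}+\mu\vert\nabla\cdot W\vert_{H^s}+\mu\vert\hbox{curl}\,W\vert_{H^s}$, writing $(\nabla\Lambda^sg,\Lambda^sW)=-(\Lambda^sg,\nabla\cdot\Lambda^sW)$ and absorbing the commutator remainders using both the inductive second estimate of (i) at level $s-1$ and the first estimate of (i) applied to $\mfT W=\nabla g$ (which loses one derivative in $g$ but only on a lower-order, $\sqrt\mu$-rich remainder).

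\emph{Step 3: the case $s\ge t_0+1$ (part (ii)), and the main difficulty.} When $\zeta\in H^s$ the scheme of Steps 1--2 applies verbatim, the only change being that the commutator estimates are used in their high-regularity form $\vert[\Lambda^s,f]g\vert_2\lesssim\vert\nabla f\vert_{L^\infty}\vert g\vert_{H^{s-1}}+\vert f\vert_{H^s}\vert g\vert_{L^\infty}$, both factors controlled since $s-1>t_0>1$; the resulting constants depend only on $\frac{1}{h_{min}}$ and $\vert h-1\vert_{H^s}$ and are independent of $(\mu,\varepsilon)$. The four operator-norm bounds in (ii) are then just restatements of $\vert\mfT^{-1}f\vert_{H^s}\le c_s\vert f\vert_{H^s}$, $\sqrt\mu\vert\nabla\cdot\mfT^{-1}f\vert_{H^s}+\sqrt\mu\vert\hbox{curl}\,\mfT^{-1}f\vert_{H^s}\le c_s\vert f\vert_{H^s}$, $\sqrt\mu\vert\mfT^{-1}\nabla g\vert_{H^s}\le c_s\vert g\vert_{H^s}$ and $\sqrt\mu\vert\mfT^{-1}\nabla^\perp g\vert_{H^s}\le c_s\vert g\vert_{H^s}$. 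I expect the only genuinely delicate points to be (a) the uniform-in-$\varepsilon$ coercivity, which forces the sharp Young constant on the topographic cross term, and (b) the bookkeeping of the $\sqrt\mu$ weights in the commutator induction for the second-order operator $\mathcal{T}[h,\varepsilon b]$ — one must check that each commutator remainder can be paired so that one factor of $\sqrt\mu$ lands on a lower-order norm already controlled by the inductive hypothesis and the other on the quantity being estimated, and that no constant ever picks up a negative power of $\mu$.
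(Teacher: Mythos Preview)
Your proposal is correct and follows essentially the same route as the paper: coercivity of $(\mfT\cdot,\cdot)$ to get the $L^2$ inverse (the paper obtains this by completing the square into $h\big(\tfrac{h}{\sqrt3}\nabla\cdot W-\varepsilon\tfrac{\sqrt3}{2}\nabla b\cdot W\big)^2+\tfrac{\varepsilon^2}{4}h(\nabla b\cdot W)^2$, you use Young's inequality, which is equivalent), then a commutator argument with $\Lambda^s$ to propagate the estimates to higher Sobolev regularity. The paper's proof is deliberately terse, deferring the bounded-inverse step to Lemma~1 of \cite{SI1} and the entire $H^s$ bootstrap to Lemma~4.7 of \cite{AL2}; what you have written is precisely the content of those references, carried out explicitly and with the correct $\sqrt\mu$ bookkeeping.
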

\begin{remark}\label{remark b}
Here and throughout the rest of this paper, and for the sake of simplicity, we do not try to give some optimal regularity assumption on 
the bottom parametrization $b$. This could easily be done, but is of no interest for
our present purpose. Consequently, we omit to write the dependence on $b$ of the
different quantities that appear in the proof. 
\end{remark}
\begin{proof}
It can be remarked that the operator $\mfT$ is $L^2$ self-adjoint; since, one has
\begin{eqnarray*}
&&\Big(h+\mu \big(\mathcal{T}[h,\varepsilon b]-\nabla^\perp\hbox{curl}\;\big)v,v\Big)=(hv,v)\\
&&+\mu\Big(h\Big(\frac{h}{\sqrt{3}}\nabla\cdot v-\varepsilon\frac{\sqrt{3}}{2}\nabla b\cdot v\Big), \frac{h}{\sqrt{3}}\nabla\cdot v-\varepsilon\frac{\sqrt{3}}{2}\nabla b\cdot v\Big)+\frac{\mu\varepsilon^2}{4}(h\nabla b\cdot v,\nabla b\cdot v)\\
&&+\mu(\hbox{curl}\,v,\hbox{curl}\,v),
\end{eqnarray*}
and using the fact that $\inf_{\R^2} h\ge h_{min}$, one deduces that 
$$
(\mfT v,v)\ge E(\varepsilon b,v),
$$ 
with 
$$
 E(\varepsilon b,v):= h_{min}\vert v\vert_2^2+\mu h_{min}\vert \frac{h}{\sqrt{3}}\nabla\cdot v-\varepsilon\frac{\sqrt{3}}{2}\nabla b\cdot v \vert_2^2+\frac{\mu\varepsilon^2 h_{min}}{4}\vert \nabla b\cdot v\vert_2^2
 +\mu\vert \hbox{curl}\,v\vert_2^2,
$$
proceeding exactly as in the proof of the Lemma 1 of \cite{SI1}
it follows that $\mfT$ has an 
 inverse bounded on $ (L^2(\R^2))^2$.\\
For the rest of the proof, One can proceeding  as in the proof of Lemma 4.7  of \cite{AL2}, to get the result.
\end{proof}
\subsection{Linear analysis of (\ref{GNnew})}\label{lanew}
In order to rewrite the new GN type equations (\ref{GNnew}) in a condensed form, let us write $U=(\zeta,v^T)^T$, $v=(V_1,V_2)^T$  and
$$
Q=h^3(-\partial_1v\cdot\partial_2v^\perp-(\nabla\cdot v)^2).
$$
We decompose now the $O(\varepsilon\mu)$ term of the second equation of (\ref{GNnew}) under the form
\begin{equation*}
-\frac{2}{3}\nabla Q+ R[h,\varepsilon b](v)=R_1[U]v+r_2(U),
\end{equation*}
with for all  $f=(F_1,F_2)^T$
\begin{eqnarray}\label{decompose}
R_1[U]f&=& -\frac{2}{3}\nabla Q[U]f+ \frac{\varepsilon}{2}\nabla(h^2(V_1F_1\partial_1^2b+2V_1F_2\partial_1\partial_2 b+V_2F_2\partial_2^2b))\nonumber\\\nonumber\\&&
-\varepsilon h^2 (-\partial_1v\cdot\partial_2f^\perp-(\nabla\cdot v)(\nabla\cdot f))\nabla b;\nonumber\\\nonumber\\
r_2(U)&=&\varepsilon^2 h(V_1^2\partial_1^2b+2V_1V_2\partial_1\partial_2 b+V_2^2\partial_2^2b)\nabla b, \nonumber
\end{eqnarray}
where 
$$
Q[U]f=h^3(-\partial_1v\cdot\partial_2f^\perp-(\nabla\cdot v)(\nabla\cdot f)),
$$
(in particular, $Q=Q[U]v$).
The new Green-Naghdi equations (\ref{GNnew}) can thus be written after applying $\mfT^{-1}$ to both sides of the second equation
in  (\ref{GNnew}) as
\begin{equation}\label{condensedeqnew}
\partial_tU+A[U]U+B(U)=0,
\end{equation}
with $U=(\zeta,V_1,V_2)^T$, $v=(V_1,V_2)^T$ and where\\
\begin{equation}
A[U]=\left(
\begin{array}{cc}
\varepsilon v\cdot\nabla &h\nabla\cdot\\
\mfT^{-1}(h\nabla)& \varepsilon (v\cdot\nabla)+\varepsilon\mu\mfT^{-1}R_1[U]
\end{array}
\right)
\end{equation}
and\\
\begin{equation}
B(U)=\left(
\begin{array}{c}
\varepsilon \nabla b\cdot v\\
\varepsilon\mu \mfT^{-1}r_2(U)
\end{array}
\right).
\end{equation}
This subsection is devoted to the proof of energy estimates for the following initial value problem around some reference state 
$\underline{U}=(\underline{\zeta},\underline{V_1},\underline{V_2})^T$:
\begin{equation}\label{GNlsysnew}
	\left\lbrace
	\begin{array}{l}
	\dsp\partial_t U+A[\underline{U}]U+B (\underline{U})=0;
        \\
	\dsp U_{\vert_{t=0}}=U_0.
	\end{array}\right.
\end{equation}

We define first the $X^s$ spaces, which are the energy spaces for this problem.
\begin{definition}\label{defispace}
 For all $s\ge 0$ and $T>0$, we denote by $X^s$ the vector space $H^s(\R^2)\times (H^{s+1}(\R^2))^2$ endowed with the norm
$$
\forall\; U=(\zeta,v) \in X^s, \quad \vert U\vert^2_{X^s}:=\vert \zeta\vert^2 _{H^s}+\vert v\vert^2 _{(H^s)^2}+ \mu\vert \nabla \cdot v\vert^2 _{H^s}+\mu\vert \hbox{\textnormal{curl}}\,v\vert^2 _{H^s},
$$
while $X^s_T$ stands for $C([0,\frac{T}{\varepsilon}];X^{s})$ endowed with its canonical norm.
\end{definition}
We define the matrix $S$ as
\begin{equation}
S=\left( 
\begin{array}{cc}
 1& 0 \\ 
0& \underline{\mfT}
\end{array}
\right),
\end{equation}\\
with 
$\underline{h}=1+\varepsilon(\underline{\zeta}-b)$ and
$\underline{\mfT}=\underline{h}+\mu(\mathcal{T}[\underline{h},\varepsilon b]-\nabla^\perp(\nabla\wedge\;))$.
A natural energy for the IVP (\ref{GNlsysnew}) is given by
\begin{equation}\label{es}
 E^s(U)^2=(\Lambda^sU,S\Lambda^sU).
\end{equation}
The link between $ E^s(U)$ and the $X^s$-norm is investigated in the following Lemma.
\begin{lemma}\label{lemmaes}
Let $b\in C_b^{\infty}(\R^2)$,  $s\geq 0$ and $ \underline{\zeta}\in W^{1,\infty}(\R^2)$. Under the condition (\ref{depthcondnew}),
$E^s(U)$ is uniformly equivalent to the $\vert \cdot\vert_{X^s}$-norm  with respect to $(\mu, \varepsilon) \in (0,1)^2$:
$$
E^s(U) \leq C\big(\vert \underline{h}\vert_{W^{1,\infty}}\big)\vert U\vert_{X^s},
$$
and
$$
\vert U \vert_{X^s}\leq C\big(\frac{1}{h_{min}}\big) E^s(U).
$$
\end{lemma}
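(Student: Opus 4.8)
The plan is to reduce both inequalities to the quadratic-form identity for $\underline{\mfT}$ that was already computed in the proof of Lemma \ref{proprimnew'} (applied with $h$ there replaced by $\underline{h}$). Since the matrix $S$ is block diagonal, $E^s(U)^2=\vert\Lambda^s\zeta\vert_2^2+(\underline{\mfT}w,w)$ with $w:=\Lambda^s v$; and because $\Lambda^s$ is a Fourier multiplier it commutes with $\nabla\cdot$ and $\hbox{curl}$, so that $\vert\nabla\cdot w\vert_2=\vert\nabla\cdot v\vert_{H^s}$ and $\vert\hbox{curl}\,w\vert_2=\vert\hbox{curl}\,v\vert_{H^s}$, hence $\vert U\vert_{X^s}^2=\vert\Lambda^s\zeta\vert_2^2+\vert w\vert_2^2+\mu\vert\nabla\cdot w\vert_2^2+\mu\vert\hbox{curl}\,w\vert_2^2$. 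So the whole matter is to bound above and below the four terms in
\[
(\underline{\mfT}w,w)=(\underline{h}w,w)+\mu\Big(\underline{h}\big(\tfrac{\underline{h}}{\sqrt3}\nabla\cdot w-\tfrac{\sqrt3}{2}\varepsilon\nabla b\cdot w\big),\tfrac{\underline{h}}{\sqrt3}\nabla\cdot w-\tfrac{\sqrt3}{2}\varepsilon\nabla b\cdot w\Big)+\tfrac{\mu\varepsilon^2}{4}(\underline{h}\nabla b\cdot w,\nabla b\cdot w)+\mu\vert\hbox{curl}\,w\vert_2^2
\]
by the pieces of $\vert U\vert_{X^s}^2$, keeping careful track of the powers of $\mu$ and $\varepsilon$.

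For the upper bound I would estimate $(\underline{h}w,w)\le\vert\underline{h}\vert_{L^\infty}\vert w\vert_2^2$, bound the $\mu$-term by $\mu\vert\underline{h}\vert_{L^\infty}\vert\tfrac{\underline{h}}{\sqrt3}\nabla\cdot w-\tfrac{\sqrt3}{2}\varepsilon\nabla b\cdot w\vert_2^2$ and split it by the triangle inequality into a multiple of $\mu\vert\nabla\cdot w\vert_2^2$ plus a multiple of $\mu\varepsilon^2\vert w\vert_2^2$, and bound the $\tfrac{\mu\varepsilon^2}{4}$-term by a multiple of $\mu\varepsilon^2\vert w\vert_2^2$; since $\mu\varepsilon^2\le1$ and $\vert\nabla b\vert_{L^\infty}$ is a fixed constant (Remark \ref{remark b}), all the $\mu,\varepsilon$-free contributions collapse onto $\vert w\vert_2^2$ while $\mu\vert\nabla\cdot w\vert_2^2$ and $\mu\vert\hbox{curl}\,w\vert_2^2$ keep their factor $\mu$, which gives $E^s(U)\le C(\vert\underline{h}\vert_{W^{1,\infty}})\vert U\vert_{X^s}$. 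For the lower bound I would use the coercivity $(\underline{\mfT}w,w)\ge E(\varepsilon b,w)$ established in the proof of Lemma \ref{proprimnew'}; it already dominates $h_{min}\vert w\vert_2^2$, $\mu h_{min}\vert\tfrac{\underline{h}}{\sqrt3}\nabla\cdot w-\tfrac{\sqrt3}{2}\varepsilon\nabla b\cdot w\vert_2^2$ and $\mu\vert\hbox{curl}\,w\vert_2^2$, so that the only term of $\vert U\vert_{X^s}^2$ not yet visible is $\mu\vert\nabla\cdot w\vert_2^2$. To recover it, the nonzero-depth condition (\ref{depthcondnew}) gives $\tfrac{h_{min}}{\sqrt3}\vert\nabla\cdot w\vert_2\le\vert\tfrac{\underline{h}}{\sqrt3}\nabla\cdot w-\tfrac{\sqrt3}{2}\varepsilon\nabla b\cdot w\vert_2+\tfrac{\sqrt3}{2}\varepsilon\vert\nabla b\vert_{L^\infty}\vert w\vert_2$; squaring, multiplying by $\mu$ and using $\mu\varepsilon^2\le1$ bounds $\mu\vert\nabla\cdot w\vert_2^2$ by $C(\tfrac{1}{h_{min}})\big(\mu\vert\tfrac{\underline{h}}{\sqrt3}\nabla\cdot w-\tfrac{\sqrt3}{2}\varepsilon\nabla b\cdot w\vert_2^2+\vert w\vert_2^2\big)\le C(\tfrac{1}{h_{min}})E(\varepsilon b,w)\le C(\tfrac{1}{h_{min}})(\underline{\mfT}w,w)$. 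Adding back $\vert\Lambda^s\zeta\vert_2^2$ yields $\vert U\vert_{X^s}\le C(\tfrac{1}{h_{min}})E^s(U)$.

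The only genuinely delicate step is this last recovery of the $\mu\vert\nabla\cdot v\vert_{H^s}^2$ contribution in the lower bound: the divergence $\nabla\cdot v$ enters $\underline{\mfT}$ only inside the combination $\tfrac{\underline{h}}{\sqrt3}\nabla\cdot v-\tfrac{\sqrt3}{2}\varepsilon\nabla b\cdot v$, so one must use the lower bound $\underline{h}\ge h_{min}$ to decouple it, and then check that the cross term produced is harmless thanks to $\mu\varepsilon^2\le1$. All the rest is routine bookkeeping with $(\mu,\varepsilon)\in(0,1)^2$; in particular no commutator estimate enters, because $\underline{\mfT}$ is applied directly to $w=\Lambda^s v$ and we only invoke the pointwise quadratic-form identity and the coercivity inequality, both of which hold for an arbitrary argument.
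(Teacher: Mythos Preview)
Your proposal is correct and follows essentially the same approach as the paper's own proof, which is very terse: it writes $E^s(U)^2=\vert\Lambda^s\zeta\vert_2^2+(\Lambda^s v,\underline{\mfT}\Lambda^s v)$, then for the upper bound invokes ``the explicit expression of $\underline{\mfT}$, integration by parts and Cauchy-Schwarz inequality'', and for the lower bound says to use $\inf\underline{h}\ge h_{min}$ and ``proceed as in the proof of Lemma~\ref{proprimnew'}''. Your argument simply unpacks these two sentences explicitly, in particular the recovery of $\mu\vert\nabla\cdot v\vert_{H^s}^2$ from the coercivity inequality, which the paper leaves implicit.
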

\begin{proof}
 Notice first that
$$
E^s(U)^2=\vert \Lambda^s\zeta\vert_{2}^2 +(\Lambda^sv,\underline{\mfT}\Lambda^sv),
$$
 one gets the first estimate using the explicit expression of $\underline{\mfT}$, integration by parts and Cauchy-Schwarz inequality. \\
The other inequality can be proved by using that $\inf_{x\in\R^2}h\ge h_{min}>0$ and proceeding as in the proof of  Lemma \ref{proprimnew'}. 
\end{proof}
We prove now the energy estimates in the following proposition. It is worth insisting on the
fact that these estimates are uniform with respect to $\varepsilon,\mu\in (0,1)$; since 
the control of the $s+1$ order derivatives by the $X^s$-norm disappears as $\mu\to0$, 
 the uniformity with respect to $\mu$ requires particular care (see the control of $B_{46}$ in the proof for
instance), but is very important for the application since $\mu\ll 1$.
\begin{proposition}\label{ESpropnew}
Let  $b\in C_b^{\infty}(\R^2)$,  $t_0>1$, $s\geq t_0+1$. Let also $\underline{U}=(\underline{\zeta}, \underline{V_1},\underline{V_2})^T$ $\in X^{s}_{T}$
 be such that $\partial_t \underline{U} \in X^{s-1}_{T}$ 
and satisfying the condition  (\ref{depthcondnew}) on $[0,\frac{T}{\varepsilon}]$. Then for all   $U_0\in X^{s}$
there exists a unique solution  $U=(\zeta, V_1, V_2)^T$ $\in X^{s}_{T} $ to (\ref{GNlsysnew}) and for all $0\leq t\leq\frac{T}{\varepsilon}$
$$
E^s(U(t))\leq e^{\varepsilon\lambda_{T} t}E^s(U_0)+\varepsilon \int^{t}_{0} e^{\varepsilon\lambda_T( t-t')}C(E^s(\underline{U})(t'))dt',
$$
for some $\lambda_{T}=\lambda_{T}(\sup_{0\leq t\leq T/\varepsilon}E^s(\underline{U}(t)),\sup_{0\leq t\leq T/\varepsilon}\vert\partial_t\underline{h}(t) \vert_{L^{\infty}})$ .
\end{proposition}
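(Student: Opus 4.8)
The plan is to establish the a priori energy estimate — the heart of the statement — and to deduce existence and uniqueness from it. Since (\ref{GNlsysnew}) is linear in $U$, uniqueness will follow at once by applying the estimate to the difference of two solutions (which solves the homogeneous equation with zero initial data), and existence by a plain Friedrichs mollification argument: replace $A[\underline U]$ by $J_\delta A[\underline U]J_\delta$, which is bounded on $X^s$, solve the resulting ODE in $X^s$ by Cauchy--Lipschitz, use the a priori estimate below (valid, uniformly in $\delta$, on all of $[0,T/\varepsilon]$) to extend the approximate solutions to $[0,T/\varepsilon]$ and bound them in $X^s_T$, then pass to the limit $\delta\to0$ (a standard argument giving continuity in time into $X^s$). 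The point worth stressing is that, contrary to the situation of the standard system treated in \cite{AL2}, this mollification scheme suffices precisely because the estimate below loses no derivative. For the a priori estimate I would work with $E^s(U)^2=(\Lambda^sU,S\Lambda^sU)$, uniformly equivalent to $|\cdot|_{X^s}$ by Lemma \ref{lemmaes}; differentiating in time and using the equation (first on the smooth solutions of the regularized problem, then by passage to the limit),
\[
\frac{d}{dt}E^s(U)^2=-2\big(S\Lambda^sU,A[\underline U]\Lambda^sU\big)-2\big(S\Lambda^sU,[\Lambda^s,A[\underline U]]U\big)-2\big(S\Lambda^sU,\Lambda^sB(\underline U)\big)+\big(\Lambda^sU,(\partial_tS)\Lambda^sU\big).
\]

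The first term is where the structure of (\ref{GNnew}) enters. Computing $SA[\underline U]$ explicitly, its symmetric part — as a quadratic form — is $O(\varepsilon)$: the off-diagonal coupling $\underline h\nabla\cdot$ / $\underline h\nabla$ is skew-adjoint modulo multiplication by $\nabla\underline h=O(\varepsilon)$, the transport terms $\varepsilon\underline v\cdot\nabla$ are skew modulo $\varepsilon\nabla\cdot\underline v$ after integration by parts, and $\underline{\mfT}(\varepsilon\underline v\cdot\nabla)$ and $\varepsilon\mu R_1[\underline U]$ carry an explicit $\varepsilon$. The delicate point — and the reason the $\nabla^\perp\hbox{curl}\,$ term was introduced — is that the $\mu$-weighted contributions produce, after integration by parts, terms of the shape $\varepsilon\mu\,(\nabla\Lambda^sv)(\ldots)(\nabla\cdot\Lambda^sv)$ and $\varepsilon\mu\,(\nabla\Lambda^sv)(\ldots)(\hbox{curl}\,\Lambda^sv)$, in which $\mu|\nabla\Lambda^sv|_2$ is \emph{not} controlled by $|U|_{X^s}$ uniformly in $\mu$; one writes $\mu|\nabla\Lambda^sv|_2=\sqrt\mu\,\big(\sqrt\mu|\nabla v|_{H^s}\big)$ and uses the Plancherel identity $|\nabla v|_{H^s}^2=|\nabla\cdot v|_{H^s}^2+|\hbox{curl}\,v|_{H^s}^2$, so that $\sqrt\mu|\nabla v|_{H^s}\le|U|_{X^s}$. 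Hence $\big|(S\Lambda^sU,A[\underline U]\Lambda^sU)\big|\le\varepsilon C(E^s(\underline U))\,E^s(U)^2$.

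The commutator term is handled entry by entry with Kato--Ponce type commutator estimates and the mapping properties of $\underline{\mfT}^{-1}$ from Lemma \ref{proprimnew'} — crucially, that $\sqrt\mu\,\underline{\mfT}^{-1}\nabla$ and $\sqrt\mu\,\underline{\mfT}^{-1}\nabla^\perp$ are bounded on $H^s$ uniformly in $(\mu,\varepsilon)$ — together with the self-adjointness of $\underline{\mfT}$ and $\underline{\mfT}^{-1}$, used to move the factor $\underline{\mfT}$ hidden in $S\Lambda^sU=(\Lambda^s\zeta,\underline{\mfT}\Lambda^sv)$ off the dangerous terms; commuting $\underline{\mfT}$ through $\Lambda^s$ creates $[\underline{\mfT},\Lambda^s]$, which is harmless because $\nabla^\perp\hbox{curl}\,$ has constant coefficients and the $\mathcal{T}[\underline h,\varepsilon b]$ part of $\underline{\mfT}$ carries an extra $\mu$. \textbf{The main obstacle} is the piece coming from the $\varepsilon\mu\,\underline{\mfT}^{-1}R_1[\underline U]$ block: since $R_1[\underline U]v$ contains $-\tfrac23\nabla Q[\underline U]v$, a full gradient of a quantity that is only first order in $v$, controlling its contribution uniformly in $\mu$ forces one to combine the $\underline{\mfT}^{-1}\nabla$ smoothing, an integration by parts, and once more the Hodge identity to absorb the $\mu$-powers; this is the $2D$ analogue of the term labelled $B_{46}$ in the remark preceding the statement, and it is the place where uniformity in $\mu$ is most at risk. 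The source and symmetrizer-derivative terms are routine: $|\Lambda^sB(\underline U)|_2\le\varepsilon C(E^s(\underline U))$ by Lemma \ref{proprimnew'} (the $\sqrt\mu\,\underline{\mfT}^{-1}$ bound absorbing the $\mu$ in $\varepsilon\mu\,\underline{\mfT}^{-1}r_2(\underline U)$), and $\big|(\Lambda^sU,(\partial_tS)\Lambda^sU)\big|\le C(|\partial_t\underline h|_{L^\infty},E^s(\underline U))\,E^s(U)^2$ after the same integration by parts as in Lemma \ref{lemmaes}.

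Collecting all the bounds one obtains
\[
\frac{d}{dt}E^s(U)^2\le\varepsilon\lambda_T\,E^s(U)^2+\varepsilon\,C(E^s(\underline U))\,E^s(U),
\]
with $\lambda_T$ depending only on $\sup_{[0,T/\varepsilon]}E^s(\underline U)$ and $\sup_{[0,T/\varepsilon]}|\partial_t\underline h|_{L^\infty}$ (all the $\mu$- and $\varepsilon$-dependences having been absorbed). Dividing by $E^s(U)$ — after regularizing $E^s(U)^2$ into $E^s(U)^2+\delta$ and letting $\delta\to0$ — yields $\frac{d}{dt}E^s(U)\le\varepsilon\lambda_T E^s(U)+\varepsilon C(E^s(\underline U))$, and Gronwall's lemma gives the announced inequality. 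Together with the uniqueness and the mollification argument sketched above, this proves the proposition.
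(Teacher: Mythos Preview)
Your proposal is correct and follows essentially the same strategy as the paper: the same energy $E^s(U)^2=(\Lambda^sU,S\Lambda^sU)$, the same four-term decomposition of its time derivative, the same tools (the Hodge identity $|\nabla v|_{H^s}^2=|\nabla\cdot v|_{H^s}^2+|\hbox{curl}\,v|_{H^s}^2$, the Kato--Ponce commutator estimate, Lemma \ref{proprimnew'}, and the self-adjointness of $\underline{\mfT}$ to move it off the commutator terms), and the same Gronwall conclusion, with existence handled by a classical regularization scheme. One minor correction of labeling: the term the paper calls $B_{46}$ is the $\hbox{curl}$-part of the \emph{transport} commutator $\mu(\hbox{curl}\,[\Lambda^s,\varepsilon(\underline v\cdot\nabla)]v,\Lambda^s\hbox{curl}\,v)$, not the $R_1[\underline U]$ block (that one is $B_5$) --- though you are right that both are precisely the places where $\mu$-uniformity is at stake and is rescued by the Hodge identity.
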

\begin{proof}
Existence and uniqueness of a solution to the IVP (\ref{GNlsysnew})  is achieved by using classical methods as in 
appendix A  of \cite{SI1} for the standard $1D$ Green-Naghdi equations and we thus focus our attention on the proof 
of the energy estimate. For any $\lambda \in \R$, we compute
$$
e^{\varepsilon\lambda t}\partial_t(e^{-\varepsilon\lambda t}E^s(U)^2)=-\varepsilon\lambda E^s(U)^2 +\partial_t(E^s(U)^2).
$$
Since
$$
 E^s(U)^2=(\Lambda^sU,S\Lambda^sU),
$$
and $U=(\zeta,V_1,V_2)^T$, $v=(V_1,V_2)^T$, we have
\begin{equation}
\partial_t(E^s(U)^2)=2(\Lambda^s\zeta,\Lambda^s\zeta_t)+2(\Lambda^sv,\underline{\mfT}\Lambda^sv_t)+(\Lambda^sv,[\partial_t,\underline{\mfT}]\Lambda^sv).
\end{equation}
One gets  using the equations  (\ref{GNlsysnew}) and integrating by parts,
\begin{eqnarray}\label{qtsctrnew}
\nonumber\lefteqn{\frac{1}{2}e^{\varepsilon\lambda t}\partial_t(e^{-\varepsilon\lambda t}E^s(U)^2)=-\frac{\varepsilon\lambda}{2}E^s(U)^2 
-(SA[\underline{U}]\Lambda^s U,\Lambda^s U) }\\
& &- \big(\big[\Lambda^s,A[\underline{U}]\big] U,S\Lambda^s U\big)â
-(\Lambda^s B(\underline{U}),S\Lambda^s U)+\frac{1}{2}(\Lambda^sv,[\partial_t,\underline{\mfT}]\Lambda^sv).
\end{eqnarray}
We now turn to bound from above the different components of the r.h.s of (\ref{qtsctrnew}).\\
$\bullet$ Estimate of $(SA[\underline{U}]\Lambda^s U,\Lambda^s U)$.
 Remarking  that
 \begin{equation*}
SA[\underline{U}]=\left(
\begin{array}{cc}
\varepsilon \underline{v}\cdot\nabla&\underline{h}\nabla\cdot\\
\underline{h}\nabla& \underline{\mfT}(\varepsilon \underline{v}\cdot\nabla)+\varepsilon\mu R_1[\underline{U}]
\end{array}
\right),
\end{equation*}
we get
\begin{eqnarray*}
(SA[\underline{U}]\Lambda^s U,\Lambda^s U)&=&  (\varepsilon\underline{v}\cdot\nabla\Lambda^s\zeta,\Lambda^s\zeta)+
(\underline{h}\nabla\cdot\Lambda^sv,\Lambda^s\zeta)\\&&
+(\underline{h}\nabla\Lambda^s \zeta,\Lambda^sv)+\big((\underline{\mfT}(\varepsilon\underline{v}\cdot\nabla)+\varepsilon\mu R_1[\underline{U}])\Lambda^s v,\Lambda^sv\big)\\&&
=: A_1+A_2+A_3+A_4.
\end{eqnarray*}
We now focus on the control of $(A_j)_{1\leq j\leq 4}$.\\
$-$ Control of $A_1$. Integrating by parts, one obtains 
$$
A_1= (\varepsilon\underline{v}\cdot\nabla\Lambda^s\zeta,\Lambda^s\zeta)=-\frac{1}{2}(\varepsilon\nabla\cdot\underline{v}\Lambda^s\zeta,\Lambda^s\zeta),
$$
and one can conclude by Cauchy-Schwarz inequality that
$$
\vert A_1\vert \leq\varepsilon C(\vert \nabla\cdot\underline{v}\vert_{L^{\infty}})E^s(U)^2.
$$
$-$ Control of $A_2+A_3$. First remark that 
$$
\vert A_2+A_3\vert=\vert(\nabla\underline{h}\cdot\Lambda^s v,\Lambda^s\zeta)\vert\leq \vert\nabla\underline{h} \vert_{(L^{\infty})^2}E^s(U)^2;
$$
we get,
$$
\vert A_2+A_3\vert\leq\varepsilon C(\vert\nabla\underline{h}\vert_{(L^{\infty})^2}E^s(U)^2.
$$
$-$ Control of $A_4$. One computes,
\begin{eqnarray*}
A_4&=&\varepsilon\big(\underline{\mfT} ((\underline{v}\cdot\nabla)\,\Lambda^s v),\Lambda^sv\big)+(\varepsilon\mu R_1[\underline{U}]\Lambda^s v,\Lambda^sv)\\&&
=:A_{41}+A_{42}.
\end{eqnarray*}
 Note first that
\begin{eqnarray*}
A_{41}&=&\varepsilon(\underline{h} \; (\underline{v}\cdot\nabla)\Lambda^s v,\Lambda^sv)+\frac{\varepsilon\mu}{3}(\underline{h}^3 \; \nabla \cdot(\underline{v}\cdot\nabla)\Lambda^s v),\Lambda^s\nabla\cdot v)\\&&
-\frac{\varepsilon^2\mu}{2}(\underline{h}^2 \; \nabla b\cdot(\underline{v} \cdot\nabla)\Lambda^s v,\Lambda^s\nabla\cdot v)-\frac{\varepsilon^2\mu}{2}(\underline{h}^2\nabla b \nabla \cdot(\underline{v}\cdot\nabla)\Lambda^s v,\Lambda^sv)\\&&
+\varepsilon^3\mu(\underline{h} \; \nabla b\nabla b\cdot(\underline{v}\cdot\nabla)\Lambda^s v,\Lambda^sv)+\varepsilon\mu(\hbox{curl}\,(\underline{v}\cdot\nabla)\Lambda^s v),\Lambda^s\hbox{curl}\,v);
\end{eqnarray*}
remark also that 
\begin{eqnarray*}
(\hbox{curl}\,(\underline{v}\cdot\nabla)\Lambda^s v),\Lambda^s\hbox{curl}\,v)&=&-\dsp\frac{1}{2}\Big((\hbox{curl}\,\Lambda^s v,\partial_1\underline{V_1}\hbox{curl}\,\Lambda^sv)+
(\nabla\wedge\Lambda^s v,\partial_2\underline{V_2}\nabla\wedge\Lambda^s v)\Big)\\&&
+(\hbox{curl}\,\Lambda^s v,\partial_1\underline{v}\cdot\nabla \Lambda^sV_2)-(\hbox{curl}\,\Lambda^s v,\partial_2\underline{v}\cdot\nabla\Lambda^sV_1),
\end{eqnarray*}
and that, for  all $F$  and $G$ smooth enough, one has
\begin{eqnarray*}
((G \cdot \nabla)v,F)=-(v,F  \;\nabla \cdot G)-(v,(G \cdot \nabla)F).
\end{eqnarray*}
By using successively the  above identities, the following relation (\ref{idofnorm})
\begin{equation}\label{idofnorm}
\vert \nabla F_1\vert_2^2+\vert \nabla F_2\vert_2^2=\vert \nabla\cdot F\vert_2^2+\vert \hbox{curl}\, F\vert_2^2,
\end{equation}
 integration by parts  and the Cauchy-Schwarz inequality, one obtains directly:
$$
\vert A_{41}\vert \leq \varepsilon C(\vert \underline{v}\vert_{(W^{1,\infty})^2},\vert \underline{\zeta}\vert_{W^{1,\infty}})E^s(U)^2.
$$
For $A_{42}$, remark that 
\begin{eqnarray*}
\vert A_{42}\vert&=&\vert(\varepsilon\mu R_1[\underline{U}]\Lambda^s v,\Lambda^sv) \vert\\
&=& \Big\vert +\frac{2}{3}\varepsilon\mu(Q[\underline{U}]\Lambda^s v,\Lambda^s\nabla\cdot v)-\mu\varepsilon^2\Big(\underline{h}^2  (-\partial_1\underline{v}\cdot\partial_2\Lambda^s v^\perp-(\nabla\cdot \underline{v})(\nabla\cdot \Lambda^s v))\nabla b,\Lambda^s v\Big)\\&&
-\frac{\mu\varepsilon^2}{2}\Big(\underline{h}^2(\underline{V}_1\Lambda^sV_1\partial_1^2b+2\underline{V}_1\Lambda^sV_2\partial_1\partial_2 b+\underline{V}_2\Lambda^sV_2\partial_2^2b),\Lambda^s\nabla\cdot v\Big)\Big\vert,
\end{eqnarray*}
where 
$$
Q[\underline{U}]f=\underline{h}^3(-\partial_1\underline{v}\cdot\partial_2f^\perp-(\nabla\cdot\underline{v})(\nabla\cdot f)).
$$
we deduce that
$$
\vert A_{42}\vert \leq \varepsilon C(\vert \underline{v}\vert_{(W^{1,\infty})^2},\vert \underline{\zeta}\vert_{W^{1,\infty}})E^s(U)^2.
$$
The estimates proved in $A_{41}$ and $A_{42}$ show that
$$
\vert A_{4}\vert \leq \varepsilon C(\vert \underline{v}\vert_{(W^{1,\infty})^2},\vert \underline{\zeta}\vert_{W^{1,\infty}})E^s(U)^2.
$$
$\bullet$ Estimate of $ \big(\big[\Lambda^s,A[\underline{U}]\big] U,S\Lambda^s U\big)$.
 Remark first that
 \begin{eqnarray*}
\lefteqn{\big(\big[\Lambda^s,A[\underline{U}]\big] U,S\Lambda^s U\big)=([\Lambda^s,\varepsilon \underline{v}]\cdot\nabla\zeta,\Lambda^s\zeta)+([\Lambda^s, \underline{h}]\nabla \cdot v,\Lambda^s\zeta)} \\
&+ &([\Lambda^s, \underline{\mfT}^{-1}\;\underline{h}]\nabla\zeta,\underline{\mfT}\Lambda^sv)+([\Lambda^s, \varepsilon (\underline{v}\cdot\nabla)] v,\underline{\mfT}\Lambda^sv)
+\varepsilon\mu\big(\big[\Lambda^s, \underline{\mfT}^{-1}R_1[\underline{U}]\big]v,\underline{\mfT}\Lambda^sv\big)\\
&=:& B_1+B_2+B_3+B_4+B_5.
\end{eqnarray*}
$-$ Control of $B_1+B_2=([\Lambda^s,\varepsilon \underline{v}]\cdot\nabla\zeta,\Lambda^s\zeta)+([\Lambda^s, \underline{h}]\nabla \cdot v,\Lambda^s\zeta)$.\\ 
Since $s\geq t_0+1$, we can use the following commutator estimate (\ref{com2}) (see e.g \cite{lannes'})
\begin{equation}\label{com2}
\vert[\Lambda^s,F]G\vert_2\lesssim \vert \nabla F\vert_{H^{s-1}}\vert G\vert_{H^{s-1}}.
\end{equation}
 to get 
$$
\vert B_1+B_2\vert  \leq \varepsilon C(E^s(\underline{U}))E^s(U)^2.
$$\\ 
$-$ Control of $B_4=([\Lambda^s, \varepsilon (\underline{v}\cdot\nabla)] v,\underline{\mfT}\Lambda^sv)$. By using  the explicit expression of  $\underline{\mfT}$ we get
\begin{eqnarray*}
B_4&=&([\Lambda^s, \varepsilon \underline{v}\cdot\nabla]v,\underline{h}\Lambda^s v)+\frac{\mu}{3}(\nabla\cdot[\Lambda^s, \varepsilon (\underline{v}\cdot\nabla)]v,\underline{h}^3 \; \Lambda^s \nabla\cdot v)\\&&
-\frac{\varepsilon\mu}{2}([\Lambda^s, \varepsilon (\underline{v}\cdot\nabla)]v,\underline{h}^2 \;  \nabla b\Lambda^s \nabla\cdot v)
+\frac{\varepsilon\mu}{2}([\Lambda^s, \varepsilon( \underline{v}\cdot\nabla)]v,\nabla(\underline{h}^2 \; \nabla b\cdot\Lambda^s v))
\\&&
+\varepsilon^2\mu([\Lambda^s, \varepsilon (\underline{v}\cdot\nabla)]v,\underline{h} \; \nabla b\nabla b\cdot\Lambda^s v)+
\mu(\hbox{curl}\,[\Lambda^s, \varepsilon (\underline{v}\cdot\nabla)]v, \Lambda^s\hbox{curl}\, v)\\
&:=&B_{41}+B_{42}+B_{43}+B_{44}+B_{45}+B_{46}.
\end{eqnarray*}
One obtains as for the control of $B_1$ and $B_2$ above that
$$
\vert B_{4j}\vert  \leq \varepsilon C(E^s(\underline{U}))E^s(U)^2, \quad j \in \{1,3,4,5\}.
$$\\ 
The control of $B_{42}$ and $ B_{46}$ is more delicate because of the dependence on $\mu$ 
(recall that the energy $E^s(U)$ controls $s+1$ derivatives of $v$, but with a small coefficient $\sqrt{\mu}$
in front of the derivatives of order $O(\sqrt{\mu})$). For $B_{46}$, we thus proceed as follows:
we first write
\begin{eqnarray*}
B_{46}&=&\mu(\hbox{curl}\,[\Lambda^s, \varepsilon (\underline{v}\cdot\nabla)]v, \Lambda^s \hbox{curl}\, v)\\
&=&\mu(\partial_1[\Lambda^s, \varepsilon \underline{V_1}\partial_1] V_2, \Lambda^s \hbox{curl}\, v)
+\mu(\partial_1[\Lambda^s, \varepsilon \underline{V_2}\partial_2]V_2, \Lambda^s \hbox{curl}\,v)\\
& &-\mu(\partial_2[\Lambda^s, \varepsilon \underline{V_1}\partial_1] V_1, \Lambda^s \hbox{curl}\,v)
-\mu(\partial_2[\Lambda^s, \varepsilon \underline{V_2}\partial_2]V_1, \Lambda^s \hbox{curl}\,v)\\
&:=& B_{461}+B_{462}+B_{463}+B_{464}.
\end{eqnarray*}
Remarking that
for all $j\in\{1,2\}$ we have
$$
\partial_j[\Lambda^s, f]g=[\Lambda^s, \partial_jf]g+[\Lambda^s, f]\partial_j g,
$$
and
$$
[\Lambda^s, f\partial_j]g=[\Lambda^s, f]\partial_j g,
$$
we can rewrite $B_{461}$ under the form
\begin{eqnarray*}
B_{461}&=&\mu(\partial_1[\Lambda^s, \varepsilon \underline{V_1}\partial_1] V_2, \Lambda^s \hbox{curl}\,v)\\
&=&\mu([\Lambda^s, \varepsilon \partial_1\underline{V_1}] \partial_1V_2, \Lambda^s\hbox{curl}\,v)+\mu([\Lambda^s, \varepsilon \underline{V_1}] \partial_1^2V_2, \Lambda^s \hbox{curl}\, v).
\end{eqnarray*}
It is then easy to use the  commutator estimate (\ref{com2}) in order to obtain 
$$
\vert B_{46j}\vert  \leq \varepsilon C(E^s(\underline{U}))E^s(U)^2, \quad j \in \{1,2,3,4\}.
$$\\ 
Similarly, $B_{42}$ is  controled by $ \varepsilon C(E^s(\underline{U}))E^s(U)^2$.
The estimates proved in \\ $(B_{4j})_{ j \in \{1,2,3,4,5,6\}}$ show that
$$
\vert B_{4}\vert  \leq \varepsilon C(E^s(\underline{U}))E^s(U)^2.
$$\\ 
$-$ Control of $B_3=([\Lambda^s, \underline{\mfT}^{-1}\;\underline{h}]\nabla\zeta,\underline{\mfT}\Lambda^sv)$. Remark first that
\begin{equation*}
\underline{\mfT}[\Lambda^s, \underline{\mfT}^{-1}]\underline{h}\nabla\zeta=\underline{\mfT}[\Lambda^s, \underline{\mfT}^{-1}\;\underline{h}]\nabla\zeta
-[\Lambda^s,\underline{h}]\nabla\zeta;
\end{equation*}
moreover, since $[\Lambda^s,\underline{\mfT}^{-1}]=-\underline{\mfT}^{-1}[\Lambda^s,\underline{\mfT}]\underline{\mfT}^{-1}$, one gets 
\begin{equation*}
\underline{\mfT}[\Lambda^s, \underline{\mfT}^{-1}\;\underline{h}]\nabla\zeta=-[\Lambda^s, \underline{\mfT}] \underline{\mfT}^{-1}\underline{h}\nabla\zeta
+[\Lambda^s,\underline{h}]\nabla\zeta,
\end{equation*}
and one can check by using  the explicit expression of  $\underline{\mfT}$ that
\begin{eqnarray*}
\underline{\mfT}[\Lambda^s, \underline{\mfT}^{-1}\;\underline{h}]\nabla\zeta&=& -[\Lambda^s, \underline{h}] \underline{\mfT}^{-1}\underline{h}\nabla\zeta
+\frac{\mu}{3}\nabla\{[\Lambda^s, \underline{h}^3]\nabla\cdot(\underline{\mfT}^{-1}\underline{h}\nabla\zeta)\} \\&&
-\frac{\varepsilon\mu}{2}
\nabla[\Lambda^s, \underline{h}^2\nabla b] \cdot\underline{\mfT}^{-1}\underline{h}\nabla\zeta
+\frac{\varepsilon\mu}{2}
[\Lambda^s, \underline{h}^2\nabla b] \nabla\cdot( \underline{\mfT}^{-1}\underline{h}\nabla\zeta) \\ &&
-\varepsilon^2\mu[\Lambda^s, \underline{h}\nabla b\nabla b^T] \underline{\mfT}^{-1}\underline{h}\nabla\zeta
+[\Lambda^s,\underline{h}]\nabla\zeta.
\end{eqnarray*}
 Integrating by parts yields
\begin{eqnarray*}
B_3&=&\big(\underline{\mfT}[\Lambda^s, \underline{\mfT}^{-1}\;\underline{h}]\nabla\zeta,\Lambda^s v\big)\\
&=& -\big([\Lambda^s, \underline{h}] \underline{\mfT}^{-1}\underline{h}\nabla\zeta,\Lambda^s v\big)-\frac{\mu}{3}\big(\{[\Lambda^s, \underline{h}^3]\nabla\cdot(\underline{\mfT}^{-1}\underline{h}\nabla\zeta)\},\Lambda^s\nabla\cdot v\big)\\
&& +\frac{\varepsilon\mu}{2}
\big([\Lambda^s, \underline{h}^2\nabla b] \cdot\underline{\mfT}^{-1}\underline{h}\nabla\zeta,\Lambda^s\nabla\cdot  v\big)
+\frac{\varepsilon\mu}{2}
\big([\Lambda^s, \underline{h}^2\nabla b] \nabla\cdot( \underline{\mfT}^{-1}\underline{h}\nabla\zeta),\Lambda^s v\big) \\&&
-\varepsilon^2\mu\big([\Lambda^s, \underline{h}\nabla b\nabla b^T] \underline{\mfT}^{-1}\underline{h}\nabla\zeta,\Lambda^s v\big)
+\big([\Lambda^s,\underline{h}]\nabla\zeta,\Lambda^s v\big).
\end{eqnarray*}
One deduces directly from Lemma \ref{proprimnew'}, the commutator estimate (\ref{com2}), and Cauchy-Schwarz inequality 
that
\begin{eqnarray*}
\vert B_3\vert & \leq& C\big(\frac{1}{h_{min}},\vert \underline{h}-1 \vert_{H^{s}}\big)\; \Big\{\Big( \vert\nabla\underline{h} \vert_{H^{s-1}}+\varepsilon^2\mu\vert\underline{h}^2\nabla b\nabla b^T \vert_{H^s}
+\frac{\varepsilon\mu}{2}\vert\underline{h}\nabla b  \vert_{H^s} \Big)\vert\underline{h}  \nabla\zeta\vert_{H^{s-1}}\\&&
+ \Big(\frac{1}{3}\vert\nabla\underline{h}^3\vert_{H^{s-1}}+ \frac{\varepsilon\sqrt{\mu}}{2}\vert\underline{h}^2\nabla b \vert_{H^s}\Big)
\vert\underline{h} \nabla\zeta\vert_{H^{s-1}} 
 +\vert\nabla\underline{h} \vert_{H^{s-1}} \vert \nabla\zeta\vert_{H^{s-1}}\Big)\Big\} \vert v\vert_{X^s}.
\end{eqnarray*}
Finally,  we deduce
$$
\vert B_3\vert  \leq \varepsilon C(E^s(\underline{U}))E^s(U)^2.
$$\\ 
$-$ Control of $B_5=\varepsilon\mu\big(\big[\Lambda^s, \underline{\mfT}^{-1}R_1[\underline{U}]\big]v,\underline{\mfT}\Lambda^sv\big)$. Let us first write 
\begin{equation*}
\underline{\mfT}\big[\Lambda^s, \underline{\mfT}^{-1}R_1[\underline{U}]\big]v=-[\Lambda^s, \underline{\mfT}] \underline{\mfT}^{-1}R_1[\underline{U}]v
+\big[\Lambda^s,R_1[\underline{U}]\big]v
\end{equation*}
so, that 
\begin{eqnarray*}
\underline{\mfT}\big[\Lambda^s, \underline{\mfT}^{-1}R_1[\underline{U}]\big]v&=& -[\Lambda^s, \underline{h}] \underline{\mfT}^{-1}R_1[\underline{U}]v
+\frac{\mu}{3}\nabla\{[\Lambda^s, \underline{h}^3]\nabla\cdot(\underline{\mfT}^{-1}R_1[\underline{U}]v)\} \\&&
- \frac{\varepsilon\mu}{2}
\nabla\{[\Lambda^s, \underline{h}^2\nabla b] \cdot\underline{\mfT}^{-1}R_1[\underline{U}]v\}+
\frac{\varepsilon\mu}{2}
[\Lambda^s, \underline{h}^2\nabla b] \nabla\cdot(\underline{\mfT}^{-1}R_1[\underline{U}]v)\\&&
-\varepsilon^2\mu[\Lambda^s, \underline{h}\nabla b\nabla b^T] \underline{\mfT}^{-1}R_1[\underline{U}]v
+[\Lambda^s,R_1[\underline{U}]]v.
\end{eqnarray*}
To control the term $\big(\big[\Lambda^s,R_1[\underline{U}]\big]v,\Lambda^sv\big)$ we use the explicit expression of $R_1[\underline{U}]$:\\
 \begin{eqnarray*}
 R_1[\underline{U}]f&=& -\frac{2}{3}\nabla Q[\underline{U}]f+ \frac{\varepsilon}{2}\nabla(\underline{h}^2(\underline{V_1}F_1\partial_1^2b+2\underline{V_1}F_2\partial_1\partial_2 b+
 \underline{V_2}F_2\partial_2^2b))\\&&
 -\varepsilon h^2 (-\partial_1\underline{v}\cdot\partial_2f^\perp -(\nabla\cdot\underline{ v})(\nabla\cdot f))\nabla b,
\end{eqnarray*}
where,
 $$
  Q[\underline{U}]f=\underline{h}^3(-\partial_1\underline{v}\partial_2f^\perp-(\nabla\cdot \underline{v})(\nabla\cdot f)).
 $$
As for the control of the  term $\big(\nabla\{[\Lambda^s, \underline{h}^3]\nabla\cdot(\underline{\mfT}^{-1}R_1[\underline{U}]v)\},\Lambda^sv\big)$ we use
the explicit expression of $R_1[\underline{U}]$, the relation (\ref{idofnorm}), the commutator estimate  (\ref{com2})  and 
Lemma \ref{proprimnew'}. Indeed,
\begin{eqnarray*}
\lefteqn{\big( \nabla\{[\Lambda^s, \underline{h}^3] \nabla \cdot(\underline{\mfT}^{-1}R_1[\underline{U}]v)\},\Lambda^sv\big)=
-\frac{2}{3}\big([\Lambda^s, \underline{h}^3] \nabla \cdot(\underline{\mfT}^{-1} \nabla Q[\underline{U}]v),\Lambda^s\nabla \cdot v\big)}\\
& -&\frac{\varepsilon}{2}\big([\Lambda^s, \underline{h}^3]\nabla \cdot\underline{\mfT}^{-1}\nabla(\underline{h}^2(\underline{V_1}V_1\partial_1^2b+2\underline{V_1}V_2\partial_1\partial_2 b
+ \underline{V_2}V_2\partial_2^2b)),\Lambda^s\nabla \cdot v\big)\\
&-&\varepsilon \big([\Lambda^s, \underline{h}^3]\nabla\cdot\underline{\mfT}^{-1}(h^2 (-\partial_1\underline{v}\cdot\partial_2\Lambda^s v^\perp -(\nabla\cdot\underline{ v})(\nabla\cdot v))\nabla b),\Lambda^s\nabla \cdot v\big).
\end{eqnarray*}
and thus, after remarking that 
\begin{eqnarray*}
\vert \nabla\cdot(\underline{\mfT}^{-1}\nabla Q[\underline{U}]v )\vert_{H^{s-1}}&\leq& \vert \underline{\mfT}^{-1}\nabla Q[\underline{U}]v\vert_{H^{s}},
\end{eqnarray*}
we can proceed as for the control of $B_3$ to get
$$
\vert B_5\vert  \leq \varepsilon C(E^s(\underline{U}))E^s(U)^2.
$$\\ 
$\bullet$ Estimate of $(\Lambda^s B(\underline{U}),S\Lambda^s U)$. Note first that 
\begin{equation*}
B(\underline{U})=\left( 
\begin{array}{c}
 \varepsilon \nabla b \cdot\underline{v} \\ 
\varepsilon\mu \underline{\mfT}^{-1} r_2(\underline{U})
\end{array}
\right)
\end{equation*}
so that
\begin{eqnarray*}
(\Lambda^sB(\underline{U}),S\Lambda^s U)&=&(\Lambda^s( \varepsilon \nabla b\cdot \underline{v}),\Lambda^s\zeta)
+(\Lambda^s( \underline{\mfT}^{-1} r_2(\underline{U})), \underline{\mfT}\Lambda^sv)\\
&=&(\Lambda^s( \varepsilon \nabla b \cdot\underline{v}),\Lambda^s\zeta)
-\varepsilon\mu\big([\Lambda^s, \underline{\mfT}] \underline{\mfT}^{-1} r_2(\underline{U}) ,\Lambda^sv\big)\\
&& +  \varepsilon\mu\big(\Lambda^s r_2(\underline{U}), \Lambda^sv\big). 
\end{eqnarray*}
Using again here the explicit expressions of $ \underline{\mfT}$, $r_2(\underline{U})$ and Lemma  \ref{proprimnew'}, we get
$$
(\Lambda^sB(\underline{U}),S\Lambda^s U) \leq \varepsilon C(E^s(\underline{U}))E^s(U).
$$
$\bullet$ Estimate of $(\Lambda^sv,[\partial_t,\underline{\mfT}]\Lambda^sv)$. We have that
\begin{eqnarray*}
(\Lambda^sv,[\partial_t,\underline{\mfT}]\Lambda^sv)&=&(\Lambda^sv,\partial_t\underline{h}\Lambda^sv)+\frac{\mu}{3}(\Lambda^s \nabla\cdot v,\partial_t\underline{h}^3\Lambda^s\nabla\cdot v)\\&&
-\frac{\varepsilon\mu}{2}(\Lambda^sv,\partial_t\underline{h}^2 \nabla b\Lambda^s\nabla\cdot v)-\frac{\varepsilon\mu}{2}(\Lambda^s\nabla\cdot v,\partial_t\underline{h}^2 \nabla b\cdot\Lambda^s v)\\&&
+\varepsilon^2\mu(\Lambda^sv,\partial_t\underline{h} \nabla b\nabla b^T \Lambda^sv).
\end{eqnarray*}
Controlling these terms by   $\varepsilon C(E^s(\underline{U}),\vert\partial_t\underline{h} \vert_{L^{\infty}})E^s(U)^2$
follows directly from a Cauchy-Schwarz inequality and an integration by 
parts.\\

Gathering the informations provided by the above estimates and using the fact that the embedding $H^s(\R^2)\subset W^{1,\infty}(\R^2)$ is continuous,  we get
$$
e^{\varepsilon\lambda t}\partial_t (e^{-\varepsilon\lambda t}E^s(U)^2) 
	\leq  \varepsilon\big(C(E^s(\underline{U}),\vert\partial_t\underline{h} \vert_{L^{\infty}})-\lambda\big)E^s(U)^2+\varepsilon C(E^s(\underline{U}))E^s(U).
$$
Taking $\lambda=\lambda_T$ large enough (how large depending on 
$\sup_{t\in [0,\frac{T}{\varepsilon}]}C(E^s(\underline{U}),\vert\partial_t\underline{h} \vert_{L^{\infty}})$
to have the first term of the right hand side negative for all $t\in [0,\frac{T}{\varepsilon}]$, one deduces
$$
	\forall t\in [0,\frac{T}{\varepsilon}],\qquad
	e^{\varepsilon\lambda t}\partial_t (e^{-\varepsilon\lambda t}E^s(U)^2) 
	\leq\varepsilon C(E^s(\underline{U}))E^s(U).
$$
Integrating this differential inequality yields therefore
$$
	\forall t\in [0,\frac{T}{\varepsilon}],\qquad
	E^s(U)\leq e^{\varepsilon\lambda_{T} t}E^s(U_0)+\varepsilon \int^{t}_{0} e^{\varepsilon\lambda_T( t-t')}C(E^s(\underline{U})(t'))dt',
$$
which  is the desired result.
\end{proof}
\subsection{Main result}\label{mrnew}
In this subsection we prove the main result of this paper, which shows
the well-posedness of the new Green-Naghdi equations (\ref{GNnew}) for times of order $O(\frac{1}{\varepsilon})$.
\begin{theorem}\label{th1}
	Let $b\in C_b^{\infty}(\R^2)$,  $t_0>1$, $s\geq t_0+1$. Let also 
	 $U_0=(\zeta_0,v_0^T)^T\in X^s$
	 be such that  (\ref{depthcondnew}) is satisfied. Then  there exists 
         a maximal $T_{max}>0$, uniformly bounded from below with respect to $\varepsilon,\mu\in (0,1)$, such that the new Green-Naghdi equations (\ref{GNnew}) admit 
	 a unique solution $U=(\zeta,v^T)^T\in X^s_{T_{max}}$ with the initial condition $(\zeta_0,v_0^T)^T$
         and preserving the nonvanishing depth condition (\ref{depthcondnew}) for any $t\in [0,\frac{T_{max}}{\varepsilon})$.
         In particular if $T_{max}<\infty$ one has
         $$ \vert U(t,\cdot)\vert_{X^s}\longrightarrow\infty\quad\hbox{as}\quad t\longrightarrow \frac{T_{max}}{\varepsilon},$$
         or
         $$ \inf_{\R^2} h(t,\cdot)=\inf_{\R^2}1+\varepsilon(\zeta(t,\cdot)-b(\cdot))\longrightarrow 0 \quad\hbox{as}\quad t\longrightarrow \frac{T_{max}}{\varepsilon}.$$
    \end{theorem}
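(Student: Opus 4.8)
The plan is to construct the solution of (\ref{GNnew}) --- written in the condensed form (\ref{condensedeqnew}), $\partial_t U + A[U]U + B(U) = 0$ --- by a Picard iterative scheme based on the linear theory of Section \ref{lanew}. Set $U^0 := U_0$ (viewed as constant in time) and, given $U^n \in X^s_T$ with $\partial_t U^n \in X^{s-1}_T$ satisfying (\ref{depthcondnew}) on $[0,T/\varepsilon]$, let $U^{n+1}$ be the unique solution in $X^s_T$ of the linear IVP (\ref{GNlsysnew}) with reference state $\underline U = U^n$, provided by Proposition \ref{ESpropnew}. That $\partial_t U^{n+1} \in X^{s-1}_T$ follows from the equation itself, $\partial_t U^{n+1} = -A[U^n]U^{n+1} - B(U^n)$, using the mapping properties of $\mfT^{-1}$ from Lemma \ref{proprimnew'} and standard product and commutator estimates, which show that $A[U^n]$ maps $X^s$ into $X^{s-1}$ and $B(U^n) \in X^{s-1}$. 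Next, fixing $R := C(\frac{1}{h_{min}})\big(E^s(U_0)+1\big)$, one shows by induction, from the energy estimate of Proposition \ref{ESpropnew} and the norm equivalence of Lemma \ref{lemmaes}, that there is $T>0$ depending only on $E^s(U_0)$ and $h_{min}$ --- in particular \emph{not} on $(\mu,\varepsilon)$ --- such that $\sup_{[0,T/\varepsilon]} E^s(U^n) \le R$ and $\inf_{\R^2\times[0,T/\varepsilon]} h^n \ge h_{min}/2$ for all $n$, where $h^n = 1+\varepsilon(\zeta^n-b)$; the first bound is a Gronwall/fixed-point argument using $\varepsilon t \le T$, and the second uses that $|\partial_t\zeta^n|_{L^\infty}\lesssim|\partial_t U^n|_{X^{s-1}}$ is controlled by $R$, so that $h^n$ stays close to $1+\varepsilon(\zeta_0-b)$ on a time interval of length $O(1/\varepsilon)$.

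To get convergence, set $W^n := U^{n+1}-U^n$. Subtracting the equations for $U^{n+1}$ and $U^n$ shows that $W^n$ solves a linear problem $\partial_t W^n + A[U^n]W^n = F^n$, $W^n_{|t=0}=0$, with $F^n := (A[U^{n-1}]-A[U^n])U^n + B(U^{n-1})-B(U^n)$. Running the energy argument of Proposition \ref{ESpropnew} at the level of the norm $|\cdot|_{X^{s-1}}$ while keeping the coefficients in $X^s$ (so that all product and commutator estimates still close) gives $E^{s-1}(W^n)(t) \lesssim \varepsilon\int_0^t C(R)\big(E^{s-1}(W^n) + |F^n|_{X^{s-1}}\big)$. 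Since $F^n$ is a difference of the locally Lipschitz nonlinearities --- where one again invokes Lemma \ref{proprimnew'} to handle the difference of inverses through $[\mfT^{n-1}]^{-1}-[\mfT^{n}]^{-1} = [\mfT^{n-1}]^{-1}(\mfT^{n}-\mfT^{n-1})[\mfT^{n}]^{-1}$ --- one has $|F^n|_{X^{s-1}} \le \varepsilon\, C(R)\, |W^{n-1}|_{X^{s-1}}$. Gronwall on $[0,T/\varepsilon]$ then yields $\sup_{[0,T/\varepsilon]}|W^n|_{X^{s-1}} \le \tfrac12\sup_{[0,T/\varepsilon]}|W^{n-1}|_{X^{s-1}}$ after possibly shrinking $T$ (again only in terms of $R$ and $h_{min}$). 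Hence $(U^n)$ converges in $X^{s-1}_T$ to some $U$ solving (\ref{condensedeqnew}); the uniform bound above gives $U\in L^\infty([0,T/\varepsilon];X^s)$, and the continuity $U\in X^s_T=C([0,T/\varepsilon];X^s)$ together with uniqueness follow from now-classical arguments (the energy $E^s$ satisfies the same estimate for the exact solution, which upgrades weak to strong continuity in time; uniqueness follows from the $X^{s-1}$ estimate applied to the difference of two solutions). The nonvanishing depth condition (\ref{depthcondnew}) for $U$ is inherited from the uniform bound above.

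Finally, let $T_{max}$ be the supremum of all $T$ for which this construction produces a solution on $[0,T/\varepsilon]$; by construction $T_{max}$ is bounded below uniformly in $(\mu,\varepsilon)$. If $T_{max}<\infty$ and neither $|U(t,\cdot)|_{X^s}\to\infty$ nor $\inf_{\R^2}h(t,\cdot)\to 0$ as $t\to T_{max}/\varepsilon$, then along a sequence $t_k\to T_{max}/\varepsilon$ the states $U(t_k)$ satisfy a uniform $X^s$ bound and a uniform nonvanishing-depth bound, so applying the local existence result with initial time $t_k$ extends the solution beyond $T_{max}/\varepsilon$, contradicting maximality; this proves the stated blow-up alternative.

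The main obstacle is the contraction step, and within it the uniformity with respect to $\mu$: as stressed after Proposition \ref{ESpropnew}, the $X^s$-norm controls the $(s+1)$-st order derivatives of $v$ only with the small weight $\sqrt\mu$, so in estimating $F^n$ and running the energy argument for $W^n$ one must track these weights carefully --- exactly as in the treatment of $B_{42}$ and $B_{46}$ there --- to ensure that the contraction constant and the existence time $T$ do not degenerate as $\mu\to 0$. The structural facts that make this work are that $\Re[h,\varepsilon b]$, and hence $R_1[U]$ and $r_2(U)$, contain at most second-order derivatives of $v$ while all third-order derivatives sit behind $\mfT$, together with the uniform (in $\mu,\varepsilon$) bounds on $\mfT^{-1}$, $\sqrt\mu\,\mfT^{-1}\nabla$, $\sqrt\mu\,\mfT^{-1}\nabla^\perp$, $\sqrt\mu\,\nabla\cdot\mfT^{-1}$ and $\sqrt\mu\,\hbox{curl}\,\mfT^{-1}$ from Lemma \ref{proprimnew'}; it is precisely the extra term $\mu|\hbox{curl}\,v|_{H^s}^2$ in the $X^s$-norm, absent from $Y^s$, that allows these estimates to close.
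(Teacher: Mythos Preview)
Your proposal is correct and follows essentially the same approach as the paper. The paper's own proof simply invokes the energy estimate of Proposition~\ref{ESpropnew} and refers to the standard Picard iteration for quasilinear hyperbolic-type systems (as carried out in \cite{SI1} and \cite{AG,taylor}); you have accurately spelled out those details --- the iterative scheme, the uniform $X^s$ bound on the iterates, the contraction in $X^{s-1}$ via the difference-of-inverses identity for $\mfT$, and the blow-up alternative --- and correctly identified where the uniformity in $\mu$ requires care.
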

\begin{remark}
For $2D$ surface waves, non flat bottoms, B. A. Samaniego and D. Lannes \cite{AL2} proved  a well-posedness result for  the standard $2D$ Green-Naghdi equations 
using a Nash-Moser scheme. Our result only uses a standard Picard iterative and there is therefore no loss of regularity of the solution of the new $2D$ Green-Naghdi 
equations  with respect to the initial condition.
\end{remark}
\begin{remark}
No smallness assumption on $\varepsilon$ nor $\mu$ is required in the theorem. The fact that $T_{max}$
is uniformly bounded from below with respect to these parameters allows us to say that if some
smallness assumption is made on $\varepsilon$, then the existence time becomes larger, namely of order $O(1/\varepsilon)$. 
\end{remark}
\begin{proof}
Using the energy estimate of Proposition \ref{ESpropnew}, one proves 
the result following the same lines as in the proof  of Theorem 1 in \cite{SI1}, which is 
itself an adaptation of the standard proof of 
well-posedness of hyperbolic systems (e.g \cite{AG, taylor}).
\end{proof}
\subsection{Conservation of the almost irrotationality of $v$}\label{conservirrot}
To obtain the new $2D$ Green-Naghdi  model (\ref{GNnew}) we used the fact that $ \hbox{curl}\,v=O( \mu)$, we prove in the following 
Theorem that the new model (\ref{GNnew}) conserves of course this property.
 \begin{theorem}
 Let   $t_0>1$, $s\geq t_0+1$,
$U_0=(\zeta_0,v_0^T)^T\in X^{s+4}$ with $\vert \hbox{\textnormal{curl}}\, v_0\vert_{H^s}\leq \mu C(\vert U_0\vert_{X^s})$. Then,
 the solution $U=(\zeta, v^T)^T\in X^{s+4}_{T_{max}}$  
of the new Green-Naghdi  equations (\ref{GNnew}) 
 with the initial condition $(\zeta_0,v_0^T)^T$ satisfies 
          $$
          \forall\,0<T<T_{max}, \quad \vert \hbox{\textnormal{curl}}\,v\vert_{L^{\infty}([0,\frac{T}{\varepsilon}],H^s)}\leq\mu\, C(T,\vert U_0\vert_{X^{s+4}}).
          $$
\end{theorem}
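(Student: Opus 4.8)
The plan is to derive a closed evolution equation for the scalar vorticity $w:=\hbox{curl}\,v$ of the solution, to show that it is a transport equation whose forcing is of size $O(\mu\varepsilon)$, and then to close a Gr\"onwall estimate on $[0,\frac{T}{\varepsilon}]$. I shall use throughout the elementary $2D$ identities $\hbox{curl}(\nabla g)=0$, $\hbox{curl}(\nabla^\perp g)=\Delta g$, $\hbox{curl}(gF)=g\,\hbox{curl}\,F+\nabla^\perp g\cdot F$, and $(v\cdot\nabla)v=\frac12\nabla|v|^2+w\,v^\perp$, whence $\hbox{curl}\big((v\cdot\nabla)v\big)=\nabla\cdot(wv)=v\cdot\nabla w+(\nabla\cdot v)w$; I also use that $\nabla h=\varepsilon\nabla(\zeta-b)=O(\varepsilon)$ since $h=1+\varepsilon(\zeta-b)$. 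By Theorem~\ref{th1} (applied at the regularity level $s+4$, or by persistence of regularity) the solution belongs to $X^{s+4}_{T_{max}}$ and (\ref{depthcondnew}) holds on $[0,\frac{T}{\varepsilon}]$ for every $T<T_{max}$, so all constants and norms appearing below are bounded by quantities of the form $C(T,|U_0|_{X^{s+4}})$.

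First I would rewrite the second equation of (\ref{GNnew}) (with $\beta=\varepsilon$) as $\mfT\mathbf{a}=-h\nabla\zeta-\mu\varepsilon\,\mathcal{N}(v)$, with $\mathbf{a}:=\partial_tv+\varepsilon(v\cdot\nabla)v$ and $\mathcal{N}(v):=\frac23\nabla\big[h^3\big(\partial_1v\cdot\partial_2v^\perp+(\nabla\cdot v)^2\big)\big]+\Re[h,\varepsilon b](v)$; note that $\mathcal{N}(v)$ contains at most second order derivatives of $v$, and that $\mathbf{a}=-\mfT^{-1}(h\nabla\zeta)-\mu\varepsilon\,\mfT^{-1}\mathcal{N}(v)$ is bounded in $H^{s+2}$ by Lemma~\ref{proprimnew'} and $U_0\in X^{s+4}$. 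Dividing $\mfT\mathbf{a}=-h\nabla\zeta-\mu\varepsilon\mathcal{N}(v)$ by $h$ and using $\mfT=h+\mu(\mathcal{T}[h,\varepsilon b]-\nabla^\perp\hbox{curl}\,)$ gives $\mathbf{a}=-\nabla\zeta-\frac{\mu}{h}\mathcal{T}[h,\varepsilon b]\mathbf{a}+\frac{\mu}{h}\nabla^\perp\hbox{curl}\,\mathbf{a}-\frac{\mu\varepsilon}{h}\mathcal{N}(v)$. Applying $\hbox{curl}$, using $\hbox{curl}(\nabla\zeta)=0$ and $\hbox{curl}\big(\frac{\mu}{h}\nabla^\perp\omega\big)=\frac{\mu}{h}\Delta\omega+\mu\nabla(\frac1h)\cdot\nabla\omega$, I obtain for $\omega:=\hbox{curl}\,\mathbf{a}=\partial_tw+\varepsilon\nabla\cdot(wv)$ an equation $\mathcal{P}\omega=G$, where $\mathcal{P}\phi:=\phi-\mu\nabla\cdot\big(\frac1h\nabla\phi\big)$ and $G:=-\hbox{curl}\big(\frac{\mu}{h}\mathcal{T}[h,\varepsilon b]\mathbf{a}\big)-\hbox{curl}\big(\frac{\mu\varepsilon}{h}\mathcal{N}(v)\big)$. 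The crucial observation — which is exactly what the extra $-\mu\nabla^\perp\hbox{curl}$ terms in (\ref{GNnew}) provide — is that $G=O(\mu\varepsilon)$: every term of $\mathcal{T}[h,\varepsilon b]$ other than the principal one $-\frac13\nabla(h^3\nabla\cdot\mathbf{a})$ carries an explicit $\varepsilon$, and for that principal (gradient) term one has $\hbox{curl}\big(\frac{\mu}{h}(-\frac13)\nabla(h^3\nabla\cdot\mathbf{a})\big)=\nabla^\perp(-\frac{\mu}{3h})\cdot\nabla(h^3\nabla\cdot\mathbf{a})$, which carries the factor $\nabla^\perp(\frac1h)=-h^{-2}\varepsilon\nabla^\perp(\zeta-b)=O(\varepsilon)$; hence $|G|_{H^s}\le\mu\varepsilon\,C(|U|_{X^{s+4}})$ (this is where the extra derivatives of $U_0$ are used; I do not track the optimal count, cf.\ Remark~\ref{remark b}).

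It remains to estimate $w$. The operator $\mathcal{P}$ is self-adjoint with $(\mathcal{P}\phi,\phi)=|\phi|_2^2+\mu(\frac1h\nabla\phi,\nabla\phi)\ge|\phi|_2^2$ and, being of the same type as $\mfT$, has inverse bounded on $H^s$ uniformly in $(\mu,\varepsilon)\in(0,1)^2$ by the argument of Lemma~\ref{proprimnew'} (Lemma~4.7 of \cite{AL2}); therefore $\omega=\mathcal{P}^{-1}G$ satisfies $|\omega|_{H^s}\le\mu\varepsilon\,C(|U|_{X^{s+4}})$. Thus $w$ solves the forced transport equation $\partial_tw+\varepsilon v\cdot\nabla w+\varepsilon(\nabla\cdot v)w=\omega$, for which the standard $H^s$ energy estimate (using (\ref{com2}) exactly as in the control of $A_1$ and $B_1+B_2$ in the proof of Proposition~\ref{ESpropnew}) gives $\frac{d}{dt}|w|_{H^s}\le\varepsilon\lambda|w|_{H^s}+|\omega|_{H^s}$ for some $\lambda=\lambda(|U|_{X^{s+4}})$. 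Integrating this differential inequality and using $|w(0)|_{H^s}=|\hbox{curl}\,v_0|_{H^s}\le\mu C(|U_0|_{X^s})$ together with $|\omega(t)|_{H^s}\le\mu\varepsilon\,C(|U(t)|_{X^{s+4}})$ yields, for $0\le t\le\frac{T}{\varepsilon}$,
$$
|w(t)|_{H^s}\le e^{\varepsilon\lambda t}|w(0)|_{H^s}+\int_0^t e^{\varepsilon\lambda(t-t')}|\omega(t')|_{H^s}\,dt'\le\mu\,C(T,|U_0|_{X^{s+4}}),
$$
which is the assertion. The one genuine difficulty is the bookkeeping that certifies $G=O(\mu\varepsilon)$ and not merely $O(\mu)$: an $O(\mu)$ forcing integrated over a time interval of length $O(1/\varepsilon)$ would only produce an $O(\mu/\varepsilon)$ bound, so this point is essential, and it is precisely here that the new form of the Green--Naghdi system is used.
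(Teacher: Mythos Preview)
Your proof is correct and follows essentially the same route as the paper: divide the momentum equation by $h$, take the curl, identify the elliptic operator $I-\mu\nabla\cdot(\frac1h\nabla)$ (which the paper writes as $I-\mu\,\hbox{curl}\,(\frac1h\nabla^\perp)$), and exploit that the forcing is $O(\mu\varepsilon)$ because $\nabla h=O(\varepsilon)$ kills the curl of the gradient terms in $\mathcal{T}[h,\varepsilon b]$. The only difference is organizational: you group $\mathbf a=\partial_t v+\varepsilon(v\cdot\nabla)v$, apply $\mathcal P$ to $\omega=\hbox{curl}\,\mathbf a$, invert it, and then run a pure transport estimate for $w$, whereas the paper keeps the advection separate, applies $\mathcal P$ to $\partial_t w$ alone, and does the energy estimate directly on the resulting coupled equation $\mathcal P\,\partial_t w+\varepsilon\nabla\cdot(vw)=\varepsilon\mu(f_1+f_2)$.
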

\begin{proof}
Applying the operator $ \hbox{curl}\,( \cdot)$ to the second equation of the model  (\ref{GNnew}) after multiplying it by $\dsp\frac{1}{h}$  yields 
$$
 \partial_t w+ \varepsilon \nabla\cdot(vw)= \varepsilon\mu f_1 +g,
$$
where
\begin{eqnarray*}
f_1&=&-\mbox{\textnormal{curl }}\big(\frac{1}{h}\mathcal{T}[h,\varepsilon b]-\frac{1}{h}\nabla^\perp\mbox{curl }\big)v\cdot \nabla v\\
& &-\mbox{curl }\frac{1}{h}\Big\lbrace\frac{2}{3}\nabla[h^3(\partial_1v\cdot\partial_2v^\perp+(\nabla\cdot v)^2)]+ \Re[h,\varepsilon b](v)\Big\rbrace
\end{eqnarray*}
and 
$$
g=-\mu\mbox{curl }\big(\frac{1}{h}\mathcal{T}[h,\varepsilon b]-\frac{1}{h}\nabla^\perp\mbox{curl }\big)\partial_t v.
$$
From the identity $\mbox{curl}(\frac{1}{h}W)=-\varepsilon \frac{1}{h^2}\nabla^\perp(\zeta-b)\cdot W+\frac{1}{h}\mbox{curl }W$, we deduce that $g$ can be put under the
form 
$$
g=\varepsilon\mu f_2+\mu\hbox{curl}\,(\frac{1}{h}\nabla^\perp\partial_t w),
$$
 with
$$
f_2=\frac{1}{h^2}\nabla^\perp(\zeta-b)\cdot \mathcal{T}[h,\varepsilon b]\partial_tv.
$$
We have thus shown that $w$ solves
$$
\Big(I-\mu\mbox{curl }\big(\frac{1}{h}\nabla^\perp\,\big)\Big)\partial_t w+\varepsilon \nabla\cdot(vw)=\varepsilon\mu f_1+\varepsilon\mu f_2.
$$
Energy estimates on this equation then show that for all $0<T<T_{max}$, one has
$$
\vert w\vert_{L^\infty([0,T/\varepsilon],H^{s})}\leq \mu C(T,\vert U\vert_{X^s_T})\big(\vert f_1\vert_{L^\infty([0,T/\varepsilon],H^{s})}+\vert f_2\vert_{L^\infty([0,T/\varepsilon],H^{s})}\big).
$$
Now, one deduces from the explicit expression of $f_j$ ($j=1,2$) that
$\vert f_j\vert_{L^\infty([0,T/\varepsilon],H^{s})}\leq C(\vert U\vert_{X^{s+4}_T})$; with Theorem \ref{th1}, we deduce that $\vert f_j\vert_{L^\infty([0,T/\varepsilon],H^{s})}\leq C(T,\vert U_0\vert_{X^{s+4}_T})$, and the result follows.
\end{proof}
\subsection*{Acknowledgments}\ The author is grateful to David Lannes for encouragement and many helpful discussions.

\providecommand{\href}[2]{#2}
\end{document}